\newcommand{\Ltwo}{L^2\left(\mathbb{S}^2 \right)} 
\newcommand{\sphere}{\mathbb{S}^{2}}
\newcommand{\Y}{Y_{\ell,m}}
\newcommand{\Yconj}{\overline{Y}_{\ell,m}}
\newcommand{\suml}{\sum_{\ell \geq 1}}
\newcommand{\summ}{\sum_{m=-\ell }^{\ell }}
\newcommand{\alm}{a_{\ell,m}}
\newcommand{\almG}{a_{\ell,m;G}}
\providecommand{\Ex}[1]{\mathbb{E}\left[#1\right]}
\newcommand{\diff}{\ensuremath{\,\mathrm{d}}}
\providecommand{\el}[1]{\ell_{#1}}
\newcommand{\mm}[1]{m_{#1}}
\providecommand{\Ybis}[2]{Y_{\ell_{#1},m_{#2}}}
\providecommand{\almbis}[2]{a_{\ell_{#1},m_{#2}}}
\providecommand{\walmbis}[2]{\widetilde{a}_{\ell_{#1},m_{#2}}}
\newcommand{\almtwo}{a_{\ell,m;2}}
\newcommand{\cumq}[1]{\mathrm{Cum}_4\left(#1\right)}
\newcommand{\wigner}[6]{ \begin{pmatrix}
	#1 & #2 & #3 \\ 
	#4 & #5 & #6%
\end{pmatrix}}
\newcommand{\wignersix}[6]{ \begin{Bmatrix}
		#1 & #2 & #3 \\ 
		#4 & #5 & #6%
\end{Bmatrix}}
\newcommand{\Var}[1]{\mathrm{Var}\left(#1\right)}
\newcommand{\Cov}[2]{\mathrm{Cov}\left(#1,#2\right)}
\newcommand{\fNL}{f_{\mathrm{NL}}}
\newcommand{\ClG}{C_{\ell;G}}
\newcommand{\Clbis}[1]{C_{\ell_{#1}}}
\newcommand{\lcard}{2\ell+1}
\newcommand{\lcardbis}[1]{2\ell_{#1}+1}
\newcommand{\walm}{\widetilde{a}_{\ell,m}}
\newcommand{\wT}{\widetilde{T}}
\newcommand{\cg}{C^{\ell,m}_{\el{1},\mm{1};\el{2},\mm{2}}}
\newcommand{\cgnull}{C^{\ell,0}_{\el{1},0;\el{2},0}}
\newcommand{\cgbis}[6]{C^{#1,#2}_{#3,#4;#5,#6}}
\newcommand{\gaunt}{\mathcal{G}\left(\el{1},\mm{1};\el{2},\mm{2};\ell,m\right)}
\newcommand{\gauntbis}{\mathcal{G}\left(\el{1},\mm{1};\el{2},\mm{2};\ell,-m\right)}
\newcommand{\Bisp}{B_{\ell _{1},\ell _{2},\ell _{3}}}
\newcommand{\bisp}{b_{\ell _{1},\ell _{2},\ell _{3}}}
\newcommand{\Gammal}{\Gamma_{\el{1},\el{2},\el{3}}}
\newcommand{\hal}{h_{\el{1},\el{2},\el{3}}}
\newcommand{\etal}{\eta_{\el{1},\el{2},\el{3}}} 
\newcommand{\Bispest}{\widehat{B}_{\ell _{1},\ell _{2},\ell _{3}}}
\newcommand{\fNLest}{\widehat{f}_{\text{NL}}}
\newcommand{\TG}{T_{G}}
\newcommand{\Plm}{P_{\ell, m}}
\providecommand{\cc}[1]{\overline{#1}}
\newcommand{\argmin}{\operatorname{argmin}}
\newcommand{\dW}{d_{\operatorname{TV}}}
\newcommand{\CW}{C_{TV;\alpha,r}}
\providecommand{\abs}[1]{\left\vert #1 \right \vert}
\newcommand{\LambdaL}{	\Lambda_{L,L_0} }
\providecommand{\summinus}{\sum_{\left(\ell_1,\ell_2,\ell_3\right) \in \LambdaL}}
\newcommand{\Seta}{S_ {\eta^2;L,L_0}}
\newcommand{\Skappa}{S_{\kappa;L,L_0}}
\newcommand{\Ar}{A_r}
\newcommand{\kappal}{\kappa_{\ell_1,\ell_2,\ell_3}}
\newtheorem{theorem}{Theorem}
\newtheorem{condition}[theorem]{Condition}
\newtheorem{corollary}[theorem]{Corollary}
\newtheorem{lemma}[theorem]{Lemma}
\newtheorem{proposition}[theorem]{Proposition}
\newtheorem{remark}[theorem]{Remark}
\title{Gaussian approximation for non-linearity parameter estimation in perturbed random fields on the sphere}
\author{Claudio Durastanti\thanks{Department of Basic Sciences and Applied for Engineering, Sapienza University of Rome, \texttt{claudio.durastanti@uniroma1.it}} 
}
\date{\today}  
\begin{document}
	
	\maketitle

\begin{abstract}
We develop a probabilistic framework for the asymptotic analysis of a bispectrum-based estimator of primordial non-Gaussianity for isotropic random fields on the sphere in the high-resolution regime. By reformulating the estimation problem as an ordinary least squares regression, we derive the asymptotic moments of the estimator. Combining these results with Stein–Malliavin techniques on Wiener chaos yields a quantitative Gaussian approximation with an explicit convergence rate in total variation distance. The analysis relies on sharp asymptotic estimates for the deterministic weights arising from spherical harmonic coupling coefficients. Numerical experiments illustrate the predicted scaling laws and provide qualitative evidence for the asymptotic Gaussian behavior.\\

\noindent\textbf{Keywords:} Spherical random fields, Primordial non-Gaussianity, spherical harmonics, 	Komatsu-Spergel-Wandelt (KSW) estimator, Spherical Bispectrum, quantitative central limit theorem.

\noindent\textbf{MSC 2020:} 60G60, 60F05, 42C10  
\end{abstract}

\section{Introduction}\label{sec:intro}

The statistical analysis of non-linear functionals of random fields on manifolds constitutes a central theme in modern applied probability, with deep implications for spatial statistics and cosmological data analysis. A prime example is the estimation of the primordial non-Gaussianity parameter, denoted by $\fNL$, in local inflationary models. From a probabilistic perspective, this task can be formulated as the inference of a non-Gaussianity parameter from a single realization of an isotropic random field on the sphere $\mathbb{S}^2$, which requires characterizing the asymptotic distribution of the resulting estimator. While standard Gaussian random fields on the sphere are well-understood, non-Gaussian perturbations naturally generate complex higher-order dependencies (see, e.g., \cite{baas,planck2020b}), demanding robust quantitative central limit theorems (CLTs) for non-linear spherical functionals.

In this context, the Cosmic Microwave Background (CMB) temperature anisotropies are mathematically modeled as a realization of an isotropic spherical random field, whose departures from Gaussianity are encoded through its third-order dependence structure. Consequently, the angular bispectrum, the spherical analogue of the third-order polyspectrum, provides the natural statistic for detecting such non-Gaussian signatures. Under the local model of primordial non-Gaussianity, the reduced bispectrum depends linearly on $\fNL$. This algebraic structure motivated the introduction of the Komatsu--Spergel--Wandelt (KSW) estimator \cite{ksw05}, which has become a benchmark statistic in empirical cosmology (see, e.g., \cite{Mun14,planck2020a}). However, while the KSW estimator is widely used in practice, a rigorous mathematical characterization of its asymptotic distribution under high-frequency regimes has remained elusive due to the intricate combinatorial coupling of its weights.

The literature on spherical random fields has recently seen significant developments along two parallel lines. On one hand, physical and cosmological studies have extensively focused on implementing computationally efficient estimators for $\fNL$ (see, e.g., \cite{bartolo,cagliari24,komatsu10,yadavwandelt10}). On the other hand, a substantial body of probabilistic research has established central limit theorems, quantitative normal approximations, and functional limit theorems for spherical bispectrum statistics under Gaussianity (see, e.g., \cite{M2008,marpec4,MaPeCUP}). 
Crucially, these available probabilistic results focus on individual multipoles or smoothly normalized bispectrum functionals, and they do not directly apply to the KSW estimator. The KSW statistic is a heavily weighted linear functional over the entire set of admissible triplet configurations, where the weights are governed by Clebsch-Gordan coefficients and Wigner $3j$-symbols. Consequently, its asymptotic behavior cannot be deduced from standard ergodic arguments or existing limit theorems for independent blocks, necessitating a novel and dedicated probabilistic analysis.

In this paper, we bridge this gap by investigating the asymptotic behavior of a narrow-band version of the KSW estimator in the high-frequency regime, where the maximum multipole resolution $L$ tends to infinity. Specifically, we restrict attention to multipoles satisfying
\(
rL\le \ell\le L,
\)
for a fixed $r\in(0,1/2)$, thereby defining a narrow-band estimator while preserving the asymptotic $L^3$ scaling of the number of admissible multipole configurations.

Our main contribution is a quantitative central limit theorem for the KSW-type narrow-band estimator $\fNLest$ in this high-frequency regime. We prove that
\[
d_{\mathrm{TV}}
\!\left(
\frac{\widehat f_{\mathrm{NL}}-f_{\mathrm{NL}}}
{\sqrt{\operatorname{Var}(\widehat f_{\mathrm{NL}})}},
Z
\right)
\le C L^{-2},
\]
where $Z \sim \mathcal{N}(0,1)$ and $C>0$ is a constant independent of $L$. This establishes an explicit, quantitative rate of convergence in the total variation distance. The proof relies on a combination of asymptotic estimates for Wigner $3j$-symbols, properties of Gaunt integrals, and a bound on fourth-order cumulants via Stein--Malliavin calculus on the Wiener chaos (see, e.g., \cite{noupebook}). This machinery allows us to control the complex spatial dependence induced across all scales by the triangular configurations. To the best of our knowledge, this provides the first quantitative CLT for the KSW estimator, introducing a methodological framework that can be extended to other weighted statistics on the sphere.

The paper is organized as follows. Section~\ref{sec:preliminary} reviews the background on isotropic spherical random fields and bispectrum statistics. Section~\ref{sec:OLS} introduces the KSW estimator, states the main quantitative central limit theorem, and develops the auxiliary results needed for its proof. Section~\ref{sec:numerics} presents some numerical experiments. Section~\ref{sec:proofs} is devoted to the proofs.

\section{Preliminary results}\label{sec:preliminary}
In this section, we collect the necessary probabilistic background on isotropic spherical random fields, their chaotic decompositions, and the associated harmonic analysis tools. We then formalize the non-Gaussian perturbed model and its third-order spectral characteristics, which underpin the construction of the weighted statistics analyzed in the sequel. Standard references include \cite{MaPeCUP,steinweiss,yadrenko}.

Let $\sphere$ denote the unit sphere in $\mathbb{R}^3$ equipped with its standard Lebesgue uniform measure $\diff x = \sin\vartheta \diff\vartheta \diff\phi$, where $x =\left(\vartheta,\varphi\right)$ represents the standard spherical coordinates. We denote by $\Ltwo=L^2\left(\mathbb{S}^2,\diff x \right)$ the Hilbert space of square-integrable complex-valued functions on the sphere.

\subsection{Gaussian random fields and their stochastic properties}
Let $\{Y_{\ell,m}:\ell \geq 1, m=-\ell ,\dots,\ell\}$ denote the standard complex spherical harmonics, which form an orthonormal basis of $L^2(\mathbb S^2)$ (see \cite{MaPeCUP,steinweiss,vilenkin}). Following for example \cite{szego}, the representation is given as follows
\begin{equation*}
\Y \left(x\right)= 	\Y \left(\vartheta,\varphi\right)=\sqrt{\frac{\lcard}{4\pi} \frac{\left(\ell-m\right)!}{\left(\ell+m\right)!}}\Plm \left(\cos \vartheta \right) e^{i m \varphi},
\end{equation*}
where $\Plm$ is the associated Legendre polynomial of degree $\ell$ and order $m$.

Let $\TG = \{\TG(x): x \in \sphere\}$ denote a centered, mean-square continuous, Gaussian random field defined on a probability space $(\Omega, \mathcal{F}, \mathbb{P})$. The field $\TG$ is strictly isotropic if its law is invariant under the action of the special orthogonal group $SO(3)$, namely,
\begin{align*}
	\TG\left(R x\right) \overset{d}{=} \TG\left(x\right) \quad \text{ for all } R \in SO(3), \quad x \in \sphere,
\end{align*}
where $\overset{d}{=}$ denotes equality in distribution. 
By the Peter--Weyl theorem, $\TG$ admits the spectral Karhunen--Lo\`eve expansion, converging in the $L^2(\Omega \times \mathbb{S}^2)$ sense:
\begin{equation}\label{eq:TG}
\TG\left( x\right) =\suml \summ \almG \Y \left( x\right) , \quad x\in \sphere,
\end{equation}%
where the random and centered harmonic coefficients $\left\lbrace\almG:\ell \geq 1, m=-\ell ,\dots,\ell\right\rbrace$ are given by 
\begin{equation*}
\almG=\langle \Y, \TG\rangle_{\sphere}=\int_{\mathbb{S}^{2}}\TG\left( x\right) \Yconj \left( x\right) \diff x.
\end{equation*}
Here, $\cc{z}$ denotes the complex conjugate of $z \in \mathbb{C}$. 

A direct consequence of isotropy is that the covariance operator is diagonalized in the harmonic basis. Specifically, the coefficients satisfy
\begin{equation}
\Ex{ \almG \cc{a}_{\ell^\prime,m^\prime;G} }= \ClG \delta_{\ell}^{\ell^\prime} \delta_{m}^{m^\prime}, \label{eq:powerdef}
\end{equation}
where $\delta_{\cdot}^{\cdot}$ is the Kronecker delta. The sequence $\left\{\ClG:\ell \geq 1 \right\}$ forms the angular power spectrum of the field. Correspondingly, by the addition theorem for spherical harmonics, the covariance function depends solely on the geodesic distance $\langle x, y \rangle$ between points $x,y \in \mathbb{S}^2$:
\begin{equation*}
\mathbb{E}[\TG(x)\overline{\TG}(y)] = \suml \frac{2\ell + 1}{4\pi} \ClG P_\ell\left(\langle x, y \rangle \right),
\end{equation*}
where $P_\ell: [-1,1] \to \mathbb{R}$ is the standard Legendre polynomial of degree $\ell$. To ensure regular high-frequency behavior of the random field, we impose the following spectral decay condition, which is standard in the spatial asymptotics literature (see, e.g., \cite{dlm,marpec2}).
\begin{condition}\label{cond:powerspectrum}
\label{powerspectrum}Let $\ClG$ be defined as in \eqref{eq:powerdef}. For any $\ell \geq 0$, there exist an amplitude $A>0$, and a spectral index $\alpha >2$ so that%
\begin{equation*}
\ClG=A\left(1+\ell\right) ^{-\alpha }.  \label{eq:powercond}
\end{equation*}
\end{condition}
Condition \ref{cond:powerspectrum} is standard in the literature on isotropic spherical random fields and is satisfied by several models arising in cosmology, including the Sachs--Wolfe model (see, for example, \cite{dode2004}). 

Under Gaussianity and isotropy, the set of coefficients $\lbrace \almG: \ell \geq 1, m=-\ell,\ldots,\ell \rbrace$ forms a collection of independent centered complex Gaussian random variables, with covariance given by \eqref{eq:powerdef} (see, e.g., \cite{ls15,MaPeCUP}).
Consequently, all higher-order moments are determined by Wick's theorem: odd moments vanish, whereas every even-order moment is obtained by summing over all pairings. For every $k\geq1$,
\[
\mathbb E\!\left[\prod_{j=1}^{2k}a_{\ell_j,m_j;G}\right]
=
\sum_{\pi\in\mathcal P_{2k}}
\prod_{(i,j)\in\pi}
\mathbb E\!\left[
a_{\ell_i,m_i;G}\,
\overline{a}_{\ell_j,m_j;G}
\right],
\]
where the sum runs over all pairings of $\{1,\ldots,2k\}$ (see, e.g., \cite[Chapter~5]{MaPeCUP}). This identity will be repeatedly used throughout the paper to evaluate higher-order moments and cumulants.

\subsection{Algebraic structure of angular coupling: Wigner and Gaunt symbols}
\label{sec:wigner}  

Non-linear transformations of spherical random fields naturally involve products of basis functions, whose harmonic projections are governed by angular momentum coupling coefficients. We recall the core algebraic properties of Wigner $3j$-symbols and Clebsch--Gordan coefficients required for tracking spatial dependencies (see \cite{MaPeCUP,vmk} for details).

The Wigner $3j$-symbols, denoted by $\wigner{\el{1}}{\el{2}}{\el{3}}{m_1}{m_2}{m_3}$, are real coefficients satisfying the following properties \cite[Proposition 3.44]{MaPeCUP}:
  \begin{enumerate}
  	\item the Wigner $3j$-coefficients are nonzero only if the following triangle conditions hold
  	\begin{align}\label{eq:ell}
  		&	\abs{\el{i}-\el{j}} \leq
  		\el{k} \leq \el{i} +\el{j}, \quad i\neq j \neq k = 1, 2, 3\\
  		& m_1 + m_2 + m_3 = 0;\label{eq:m}
  	\end{align}
  	\item the Wigner $3j$-coefficients are invariant under
  	permutations of any two columns when the sum $\el{1}+\el{2}+\el{3}$ is even; 
  	\item the following upper bound holds for any $\el{1}, \el{2}, \el{3}$:
  	\begin{align}\label{eq:wignerup}
  		\wigner{\el{1}}{\el{2}}{\el{3}}{m_1}{m_2}{m_3}\leq 
  		\left( \max\left[\left(2\el{1}+1\right)\left(2\el{2}+1\right)\left(2\el{3}+1\right)\right] \right)^{-\frac{1}{2}}.
  	\end{align}
  	\item If the triangle conditions \eqref{eq:ell}--\eqref{eq:m} hold and $m_1 = m_2 = m_3 = 0$, the
  	Wigner $3j$-coefficients are nonzero only if the sum $\el{1} + \el{2} + \el{3}$ is even;
  	\item For every choice of $\el{1}, \el{2}, \el{3}$, the following sign inversion rule holds 
  	\begin{align*}	
  		\wigner{\el{1}}{\el{2}}{\el{3}}{m_1}{m_2}{m_3}
  		=    \left( -1\right)^{\el{1}+\el{2}+\el{3}} 	\wigner{\el{1}}{\el{2}}{\el{3}}{-{m_1}}{-{m_2}}{-{m_3}}.
  	\end{align*}
  \end{enumerate}

  The Clebsch--Gordan coefficients $\cg$ are closely related to the Wigner $3j$-symbols through the relations
  \begin{equation}
	\begin{split}
  	& \wigner{\el{1}}{\el{2}}{\ell}{\mm{1}}{\mm{2}}{m} = \left(-1\right)^{\ell+m} \frac{1}{\sqrt{\lcard}} \cgbis{\ell}{m}{\el{1}}{-\mm{1}}{\el{2}}{-\mm{2}}, \\ 
  	&\cg = \left(-1\right)^{\el{1}-\el{2}+m} \sqrt{\lcard} \wigner{\el{1}}{\el{2}}{\ell}{\mm{1}}{\mm{2}}{-m}.\label{eq:wignercj},
  \end{split}
  \end{equation}
see, for further details, \cite{MaPeCUP,vmk}. They satisfy the following orthogonality and summation relations 
  \begin{align}
  	&\cgbis{\ell^\prime}{m^\prime}{\ell_2}{m_2}{\ell_1}{m_1}=(-1)^{\ell_1+\ell_2-\ell}\cgbis{\ell^\prime}{m^\prime}{\ell_1}{m_1}{\ell_2}{m_2}\label{eq:prop0}\\
  	& \sum_{m_1}(-1)^{\ell_1-m_1}\cgbis{\ell}{0}{\ell_1}{m_1}{\ell_1}{-m_1} = \sqrt{2\ell_1+1} \delta_{0}^{\ell} \label{eq:prop1}\\
  	&\cgbis{\ell}{0}{\ell_1}{m_1}{\ell_2}{-m_2} = \frac{(-1)^{\ell_1-m_1} \delta_{\ell_2}^{\ell_1}\delta_{m_2}^{m_1}}{\sqrt{2\ell_1+1} } \label{eq:prop2},\\
  	&\sum_{m_1,m_2} \cg\cgbis{\ell^\prime}{m^\prime}{\ell_1}{m_1}{\ell_2}{m_2}=\delta_{\ell}^{\ell^\prime}\delta_{m}^{m^\prime} \label{eq:prop3},
  \end{align}
  see \cite[Chapter 8]{vmk}.

The main object linking these coefficients with nonlinear functionals of spherical random fields is the Gaunt integral, namely the integral of the product of three spherical harmonics. The Gaunt integral $\gaunt$ admits the representations
  \begin{equation}\begin{split}
  & \gaunt  = \int_{\sphere} \Ybis{1}{1}\left( x \right)\Ybis{2}{2}\left( x \right)\Yconj \left( x \right) \diff x\\
  	& \quad\quad\quad =\left(-1\right)^{m} \sqrt{\frac{\left(\lcardbis{1}\right)\left(\lcardbis{2}\right)\left(\lcard\right)}{4\pi}} \wigner{\el{1}}{\el{2}}{\ell}{\mm{1}}{\mm{2}}{-m} \wigner{\el{1}}{\el{2}}{\ell}{0}{0}{0} \\
  	& \quad \quad\quad= \sqrt{\frac{\left(\lcardbis{1}\right)\left(\lcardbis{2}\right)}{4\pi\left(\lcard\right)}} \cgbis{\ell}{m}{\el{1}}{\mm{1}}{\el{2}}{\mm{2}}\cgnull. \label{eq:gaunt}
  \end{split}\end{equation}
  The Gaunt integral provides the harmonic decomposition of products of spherical harmonics and will repeatedly appear in the analysis of the perturbed field introduced in the next subsection.

\subsection{Non-linear perturbed random fields}
We consider a non-Gaussian random field $T = \{T(x): x \in \sphere\}$ constructed as a controlled quadratic perturbation of the reference Gaussian field $T_G$. Formally, we define
\[
T(x)
=
T_G(x)
+
f_{\mathrm{NL}}
H_2(T_G(x)),
\qquad x\in\mathbb S^2,
\]
where $H_2(u)=u^2-1$ is the second-order Hermite polynomial and $f_{\mathrm{NL}}$ scales the intensity of the non-Gaussian component. In the terminology of Malliavin calculus, $H_2(T_G(x))$ represents a point-wise mapping into the second Wiener chaos.

The harmonic expansion of $T$ is given by
 \begin{equation}\label{eq:Texpansion}
 T\left(x\right)=\suml \summ \alm \Y \left(x\right) = \suml \summ \left(\almG \left(x\right) + \fNL \almtwo\right) \Y \left(x\right),
 \end{equation}
where 
\begin{equation}\label{eq:Tharmonic}
 \alm=\int_{\sphere}T\left( x\right) \Yconj \left( x\right) \diff x= \almG + \fNL  \almtwo.
\end{equation}%
and the harmonic coefficients of $H_2(T_G)$ are given by
\begin{equation*}\begin{split}
\almtwo 
&= \int_{\sphere} H_2\left( T_{G}\left( x\right)
\right) \Yconj  \left( x\right) \diff x \\
& = \sum_{\ell _{1}\geq 1} \sum_{\ell _{2}\geq 1} \sum_{m_{1}=-\ell_{1}}^{\ell_{1}}  \sum_{m_{2}=-\ell_{2}}^{\ell_{2}}   \almbis{1}{1} \almbis{2}{2} \left(-1\right)^m \gauntbis ,
\end{split}\end{equation*}
Using the representation of the Gaunt integral \eqref{eq:gaunt}, it follows that 
\begin{equation}
\begin{split} \almtwo& = \sum_{\ell _{1},\ell _{2}\geq 0}  \sum_{m_{1},m_{2}}   a_{\ell_1,m_1;G} a_{\ell_2,m_2;G}  \sqrt{\frac{\left(\lcardbis{1}\right)\left(\lcardbis{2}\right)}{4\pi \left(\lcard\right)}}   \cg \cgnull\\
& =\sum_{\ell _{1},\ell _{2}\geq 0}  \sum_{m_{1},m_{2}} \left(-1\right)^m a_{\ell_1,m_1;G} a_{\ell_2,m_2;G}  \sqrt{\frac{\left(\lcardbis{1}\right)\left(\lcardbis{2}\right)\left(\lcard\right)}{4\pi}} \\
\label{eq:almtwo} &\times \wigner{\el{1}}{\el{2}}{\ell}{m_{1}}{m_{2}}{-m} \wigner{\el{1}}{\el{2}}{\ell}{0}{0}{0}.
\end{split}
\end{equation}
The harmonic expansion of the quadratic perturbation $H_2(T_G)$ is therefore given by
\[
H_2(T_G)
=
\sum_{\ell,m}a_{\ell,m;2}Y_{\ell,m}.
\]

The second-order properties of this chaotic term are encapsulated in the following proposition, originally presented in \cite{m}, which follows from standard contractive properties of Wiener chaos projections (see also \cite{MaPeCUP} and Section~\ref{sec:proofs}). 
\begin{proposition}\label{prop:almtwo}
	For any $\ell \geq 1$, and $m=-\ell,\ldots,\ell$, let $a_{\ell,m;2}$ be defined by Equation \eqref{eq:almtwo}.
	Then $\almtwo$ is centered, with covariance structure given by 
	\begin{equation*}
		\Cov{a_{\ell,m;2}}{a_{\ell^\prime,m^\prime;2}} =C_{\ell;2} \delta^{\ell}_{\ell^{\prime}}\delta^{m}_{m^{\prime}},
	\end{equation*}
	where 
	\begin{equation*}
		\begin{split}
		C_{\ell;2} & =  2\sum_{\ell _{1},\ell _{2}}  \frac{\left(\lcardbis{1}\right)\left(\lcardbis{2}\right)}{4\pi\left(\lcard\right)} \left(\cgnull\right)^2C_{\ell_1;G}C_{\ell_2;G}\\
		&= 2\sum_{\ell _{1}\geq 1} \sum_{\ell _{2}\geq 1}  \Clbis{1}\Clbis{2} \frac{\left(\lcardbis{1}\right)\left(\lcardbis{2}\right)\left(\lcard\right)}{4\pi} \wigner{\el{1}}{\el{2}}{\ell}{0}{0}{0}^2.
		\end{split}
		\end{equation*}
Furthermore, the second-chaos field $H_2(T_G)$ is strictly isotropic and its harmonic coefficients satisfy
		\begin{align*}
		&	\Ex{H_2\left(T_G(x)\right)} = \sum_{\ell \geq 1} \frac{2\ell+1}{4\pi}C_{\ell;G};\\
		&	\Cov{H_2\left(T_G(x)\right)}{H_2\left(T_G(y)\right)} =\sum_{\ell \geq 1} \frac{2\ell+1}{4\pi}C_{\ell;2} P_{\ell}\left(\langle x,y \rangle\right),
		\end{align*}
for any $x,y \in \mathbb{S}^2$.
	\end{proposition}
The proof is a shortened version of the analogous result in \cite{m,MaPeCUP} and is reported in Section \ref{sec:proofs}.

The previous proposition yields the corresponding second-order properties of the perturbed field, drawn from \cite{m,MaPeCUP}.
\begin{proposition}\label{prop:complete}
	Let $\alm$ and $T$ be defined as in Equations \eqref{eq:Tharmonic} and \eqref{eq:Texpansion}. For any $x \in \mathbb{S}^2$, the field $T(x)$ is centered and isotropic. Its harmonic coefficients satisfy
\begin{equation*}
		\Ex{\alm}=0 \quad \text{and} \quad \Ex{\left\vert\alm\right\vert^2}= C_{\ell},
	\end{equation*}
	where 
	\[
	C_{\ell} = C_{\ell;G} + \fNL^2 C_{\ell;2}.
	\]
Consequently, for any $x,y \in \mathbb{S}^2$, it holds that
	\begin{equation*}
		\Cov{T(x)}{T(y)} = \sum_{\ell \geq 1} \frac{2\ell+1}{4\pi} C_{\ell} P_{\ell}\left(\langle x,y\rangle\right). 
	\end{equation*}

\end{proposition} 

For the remainder of our asymptotic analysis, it is mathematically convenient to define the standard normalized coefficients:
\begin{equation}\label{eq:walm}
\walm =\frac{\alm}{\sqrt{\ClG}}=\frac{\almG+\fNL \almtwo}{\sqrt{\ClG}},
\end{equation}%
and the corresponding field $\wT$ defined as
\begin{equation}\label{eq:wT}
	\wT(x) = \sum_{\ell \geq 0} \sum_{m=-\ell}^{\ell} \walm \Y(x), \qquad x \in \mathbb{S}^2.
\end{equation}

Then, the following corollary holds.
\begin{corollary}
	For any $\ell \geq 1$, $m=-\ell,\ldots,\ell$, let $\walm$ be defined as in Equation \eqref{eq:walm} 
\begin{align*}
&	\Ex{\walm}  =0; \\ 
&	\Ex{\left\vert\walm\right\vert^2}= 1 +  \fNL^2 \frac{C_{\ell;2}}{\ClG}.
\end{align*}
Also, for any $x,y \in \mathbb{S}^2$, it holds that
\begin{align*}
&\Ex{\wT(x)}= 0;\\
&\Cov{\wT(x)}{\wT(y)} = \sum_{\ell \geq 1} \frac{2\ell+1}{4\pi} \left( 1 + \fNL^2\frac{C_{\ell;2}}{C_{\ell;G}}\right) P_{\ell}\left(\langle x,y\rangle\right). 
\end{align*}
\end{corollary}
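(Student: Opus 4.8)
The plan is to reduce each claim to the corresponding unnormalized identity in Proposition~\ref{prop:complete}, exploiting that the rescaling factor $\sqrt{\ClG}$ is a deterministic, strictly positive constant (strictly positive by Condition~\ref{cond:powerspectrum}). First I would treat the means. Since $\walm=\alm/\sqrt{\ClG}$ with $\ClG$ nonrandom, linearity of expectation gives $\Ex{\walm}=\Ex{\alm}/\sqrt{\ClG}=0$, invoking $\Ex{\alm}=0$ from Proposition~\ref{prop:complete}.

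For the covariance of the normalized coefficients, bilinearity of the covariance together with the deterministic normalization yields
\[
\Cov{\walm}{\widetilde{a}_{\ell^\prime,m^\prime}}=\frac{1}{\sqrt{\ClG\,C_{\ell^\prime;G}}}\,\Cov{\alm}{a_{\ell^\prime,m^\prime}}.
\]
Substituting $\Cov{\alm}{a_{\ell^\prime,m^\prime}}=C_\ell\,\delta_\ell^{\ell^\prime}\delta_m^{m^\prime}$ from Proposition~\ref{prop:complete}, all off-diagonal terms vanish, and on the diagonal $\ell=\ell^\prime$, $m=m^\prime$ the prefactor collapses to $C_\ell/\ClG$. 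Inserting the decomposition $C_\ell=\ClG+\fNL^2 C_{\ell;2}$, again from Proposition~\ref{prop:complete}, produces exactly $1+\fNL^2 C_{\ell;2}/\ClG$, as claimed.

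The two field-level identities then follow by propagating these coefficient-level facts through the harmonic expansion \eqref{eq:wT}. For the mean I would exchange expectation with the series to obtain $\Ex{\wT(x)}=\sumlm\Ex{\walm}\Y(x)=0$. For the covariance I would expand
\[
\Cov{\wT(x)}{\wT(y)}=\sumlm\sum_{\ell^\prime\geq 0}\sum_{m^\prime=-\ell^\prime}^{\ell^\prime}\Cov{\walm}{\widetilde{a}_{\ell^\prime,m^\prime}}\,\Y(x)\,\overline{Y}_{\ell^\prime,m^\prime}(y),
\]
use the diagonal covariance just computed to kill every cross term, and then apply the addition theorem $\summ\Y(x)\Yconj(y)=\frac{2\ell+1}{4\pi}P_\ell(\langle x,y\rangle)$ to recover the stated Legendre series, with the $\ell=0$ mode absent by centering.

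The only genuinely delicate point is the legitimacy of exchanging the covariance with these infinite harmonic sums, and here I would be careful: because the normalized spectrum $C_\ell/\ClG=1+\fNL^2 C_{\ell;2}/\ClG$ tends to a positive constant as $\ell\to\infty$, the variance series $\sumlm\Var{\walm}$ diverges, so $\wT$ is a normalized (white-noise-type) generalized field and its covariance kernel must be read as a formal/distributional Legendre series rather than a genuine function. Accordingly, I would prove the field-level identities term by term at the level of the coefficients, where the first two parts make everything rigorous, and interpret the resulting covariance series in the distributional sense. This interpretation is the step that requires the most care, whereas the algebraic collapse of the double sum and the invocation of the addition theorem are entirely routine.
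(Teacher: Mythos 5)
Your argument is correct and coincides with the paper's intended proof: the paper omits the details, stating only that the corollary ``follows directly'' from Proposition \ref{prop:complete}, which is exactly your reduction via the deterministic rescaling by $\sqrt{\ClG}$ together with the decomposition $\Cl=\ClG+\fNL^2 C_{\ell;2}$ and the addition theorem. Your closing observation that the normalized covariance series must be read distributionally (since $\Cl/\ClG$ does not decay in $\ell$) is a genuine subtlety the paper passes over silently, but it does not affect the coefficient-level identities, which are the substantive content of the statement.
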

The proof follows directly the one of Proposition \ref{prop:complete} and it is here omitted for the sake of brevity.

\subsection{The angular bispectrum on the Third Wiener Chaos}

The angular bispectrum represents the third-order polyspectrum of a spherical random field and acts as the fundamental diagnostic tool for non-Gaussian extensions. For the normalized field $\widetilde T$, we define the full bispectrum by
\[
B_{\ell_1,\ell_2,\ell_3}^{m_1,m_2,m_3}
=
\mathbb E
\left[
\widetilde a_{\ell_1,m_1}
\widetilde a_{\ell_2,m_2}
\widetilde a_{\ell_3,m_3}
\right].
\]
Under Gaussianity,
\[
B^{m_1,m_2,m_3}_{\ell_1,\ell_2,\ell_3}=0,
\]
since odd moments of Gaussian random variables vanish.

Under strict isotropy, the full bispectrum inherits the selection rules of the Wigner symbols and can be compressed into a rotationally invariant bispectrum $B_{\ell_1,\ell_2,\ell_3}$ by projecting it onto the Wigner manifold:
\begin{equation}\label{eq:rotbis}
		B_{\el{1},\el{2},\el{3}} =\sum_{m_1,m_2,m_3} \wigner{\el{1}}{\el{2}}{\el{3}}{\mm{1}}{\mm{2}}{\mm{3}} \Ex{\walmbis{1}{1} \walmbis{2}{2}\walmbis{3}{3}},
\end{equation} 
which is independent of the azimuthal indexes $m_1, m_2$ and $m_3$.
The reduced bispectrum $b_{\ell_1,\ell_2,\ell_3}$ is defined through
\[
B_{\ell_1,\ell_2,\ell_3}
=
\begin{pmatrix}
\ell_1&\ell_2&\ell_3\\
0&0&0
\end{pmatrix}
b_{\ell_1,\ell_2,\ell_3},
\]
thereby removing the purely geometric contribution associated with the Wigner $3j$-symbol (see \cite{hu}).
Equivalently, the expectation of the harmonic coefficients admits the representation
\begin{equation}\label{eq:expwalm}
	\begin{split}
\mathbb{E}\left[ \widetilde{a}_{\ell _{1},m_{1}}\widetilde{a}_{\ell _{2},m_{2}}%
\widetilde{a}_{\ell _{3},m_{3}}\right] & =%
\begin{pmatrix}
\ell _{1} & \ell _{2} & \ell _{3} \\ 
m_{1} & m_{2} & m_{3}%
\end{pmatrix}%
\Bisp \\
& =
\begin{pmatrix}
\ell _{1} & \ell _{2} & \ell _{3} \\ 
m_{1} & m_{2} & m_{3}%
\end{pmatrix}%
\begin{pmatrix}
\ell _{1} & \ell _{2} & \ell _{3} \\ 
0 & 0 & 0%
\end{pmatrix}%
\bisp,
\end{split}
\end{equation}%
see \cite{m,marpec4} and the references therein.\\

To formulate the first-order behavior under our non-Gaussian framework, we introduce the auxiliary geometric factors:
\begin{equation}\label{hal}
h_{\ell_1,\ell_2,\ell_3} = \sqrt{ \frac{(2\ell_1+1)(2\ell_2+1)(2\ell_3+1)}{(4\pi)^3}},
\end{equation}
and the weighted covariance factor combinations $\Gamma_{\ell_1,\ell_2,\ell_3} = \gamma_{\ell_1,\ell_2} + \gamma_{\ell_1,\ell_3} + \gamma_{\ell_2,\ell_3}$, where $\gamma_{\ell_i,\ell_j} = 2C_{\ell_i}C_{\ell_j} / \sqrt{C_{\ell_1}C_{\ell_2}C_{\ell_3}}$. Combining these definitions, we introduce the deterministic kernel weight:
\begin{equation}\label{eq:eta}
\eta_{\ell_1,\ell_2,\ell_3} = 2\zeta_{\ell_1,\ell_2,\ell_3} \begin{pmatrix} \ell_1&\ell_2&\ell_3\\ 0&0&0 \end{pmatrix} h_{\ell_1,\ell_2,\ell_3} \Gamma_{\ell_1,\ell_2,\ell_3},
\end{equation}
where $\zeta_{\ell_1,\ell_2,\ell_3} = 1+\delta_{\ell_1}^{\ell_2} +\delta_{\ell_2}^{\ell_3} +3\delta_{\ell_1}^{\ell_3}$. 

The leading-order behavior of the normalized bispectrum under the local non-Gaussian model is summarized in the following proposition (see \cite{m}). The quantity $\eta_{\ell_1,\ell_2,\ell_3}$ will play the role of the deterministic regression coefficient throughout the remainder of the paper.
\begin{proposition}\label{prop:expect}
Let $\walm$ be defined as in Equation \eqref{eq:walm}. Then, for every admissible multipole configuration, it holds that 
	\begin{equation*}
		\mathbb{E}\left[ \widetilde{a}_{\ell _{1},m_{1}}\widetilde{a}_{\ell_{2},m_{2}}\widetilde{a}_{\ell _{3},m_{3}}\right]	= \fNL \etal + o_{\ell}(\fNL). 
	\end{equation*}
\end{proposition}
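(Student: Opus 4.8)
The plan is to substitute the rescaling \eqref{eq:walm} into the triple product and organize the expansion by powers of $\fNL$. Writing $\walm=(\almG+\fNL\,\almtwo)/\sqrt{\ClG}$ and multiplying out the three factors, $\Ex{\walmbis{1}{1}\walmbis{2}{2}\walmbis{3}{3}}$ becomes a sum of Gaussian moments of the coefficients $\{a_{\ell_i,m_i;G},\,a_{\ell_i,m_i;2}\}$, each weighted by a power of $\fNL$ and by the normalization $(C_{\ell_1;G}C_{\ell_2;G}C_{\ell_3;G})^{-1/2}$. Since each $a_{\ell,m;2}$ is quadratic in the Gaussian coefficients by \eqref{eq:almtwo}, a term carrying $\fNL^{k}$ is a Gaussian moment of $3+k$ coefficients. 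The complex Wick formula annihilates every moment with an odd number of factors, so the $\fNL^{0}$ term (three Gaussian coefficients, i.e.\ the vanishing Gaussian bispectrum) and the $\fNL^{2}$ term (five factors) both vanish identically. The leading contribution is thus of order $\fNL$ and the first surviving correction is of order $\fNL^{3}$, which already pins the error as $o_{\ell}(\fNL)$.

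Next I would evaluate the order-$\fNL$ coefficient, the sum of three terms $M_i=\Ex{\cdots a_{\ell_i,m_i;2}\cdots}$ obtained by promoting the $i$-th leg to its quadratic part while keeping the other two Gaussian. Treating the representative term $M_3$ carrying $a_{\ell_3,m_3;2}$, I would insert \eqref{eq:almtwo} and apply the complex Wick theorem to the resulting fourth-order Gaussian moment. Of the three pairings, the one contracting the two internal Gaussian factors with each other collapses, via the orthogonality relation \eqref{eq:prop1}, to a factor proportional to $\delta_0^{\ell_3}$ and therefore vanishes for $\ell_3\geq 1$. The two remaining ``cross'' pairings each contract one external leg with one internal factor; using $\Ex{\almG\,\bar a_{\ell',m';G}}=\ClG\,\delta_\ell^{\ell'}\delta_m^{m'}$ together with the reality relation to track the $(-1)^m$ phases, they fix the internal summation indices to $(\ell_1,-m_1)$ and $(\ell_2,-m_2)$.

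I would then show the two cross pairings give equal contributions: this follows from the exchange symmetry \eqref{eq:prop0} of the Clebsch--Gordan coefficients combined with the even-parity selection rule $\el{1}+\el{2}+\el{3}\in 2\mathbb{Z}$, which trivializes the accompanying sign $(-1)^{\el{1}+\el{2}-\el{3}}$; this produces the overall factor $2$. Converting the surviving product of Clebsch--Gordan coefficients back to Wigner $3j$ symbols through \eqref{eq:wignercj} and the sign-inversion rule, $M_3$ reduces to $2\,C_{\ell_1;G}C_{\ell_2;G}$ times a common geometric factor built from $\hal$, $\wigner{\el{1}}{\el{2}}{\el{3}}{0}{0}{0}$ and $\wigner{\el{1}}{\el{2}}{\el{3}}{m_1}{m_2}{m_3}$. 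By the symmetric role of the three legs, $M_1$ and $M_2$ reproduce the same expression with $C_{\ell_1;G}C_{\ell_2;G}$ replaced by $C_{\ell_2;G}C_{\ell_3;G}$ and $C_{\ell_1;G}C_{\ell_3;G}$; summing assembles the combination $C_{\ell_1}C_{\ell_2}+C_{\ell_1}C_{\ell_3}+C_{\ell_2}C_{\ell_3}$ encoded in $\Gammal$, and dividing by $\sqrt{C_{\ell_1;G}C_{\ell_2;G}C_{\ell_3;G}}$ recovers $\fNL\,\etal$ as in \eqref{eq:eta} (to leading order one may replace $C_\ell$ by $\ClG$, the gap being $O(\fNL^{2})$ inside a term already of order $\fNL$).

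Finally, I would control the remainder. The next nonzero term is the order-$\fNL^{3}$ contribution $\fNL^{3}\,\Ex{a_{\ell_1,m_1;2}a_{\ell_2,m_2;2}a_{\ell_3,m_3;2}}/\sqrt{C_{\ell_1;G}C_{\ell_2;G}C_{\ell_3;G}}$, a sixth-order Gaussian moment finite for each fixed triple under Condition \ref{cond:powerspectrum}; together with the $O(\fNL^{2})$ discrepancy between $C_\ell$ and $\ClG$ inside $\etal$, this is $O(\fNL^{3})=o_{\ell}(\fNL)$, the subscript recording that the implied constant depends on $(\el{1},\el{2},\el{3})$. I expect the main obstacle to lie in the bookkeeping of the middle steps: correctly tracking the $(-1)^m$ phases and the parity constraints so that the self-contraction is seen to vanish through \eqref{eq:prop1} and the two cross-contractions coalesce, via \eqref{eq:prop0} and \eqref{eq:wignercj}, into the single clean Wigner-symbol product appearing in $\etal$.
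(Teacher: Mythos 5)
Your proposal follows essentially the same route as the paper's proof: expand the triple product in powers of $\fNL$, kill the order-$\fNL^{0}$ term as an odd Gaussian moment, evaluate the three order-$\fNL$ terms by Wick's formula with the self-contraction vanishing via \eqref{eq:prop1}--\eqref{eq:prop2} and the two cross-contractions producing the factor $2$ and the Clebsch--Gordan/Wigner structure of $\etal$. Your added observation that the $\fNL^{2}$ term also vanishes by parity (so the remainder is actually $O(\fNL^{3})$) is a minor sharpening the paper absorbs into $o_\ell(\fNL)$, but the argument is the same.
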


Combining Proposition~\ref{prop:expect} with the definition of the rotationally invariant bispectrum \eqref{eq:rotbis} gives
\begin{align}\label{eq:Bisp}
 \Bisp = &  \fNL \etal + o\left(\fNL\right) 
\end{align}
Replacing expectations in \eqref{eq:rotbis} by empirical averages leads to the sample bispectrum
\begin{equation}\label{eq:Bispest}
\Bispest =\sum_{m_{1}=-\el{1}}^{\el{1}}\sum_{m_{2}=-\el{2}}^{\el{2}}\sum_{m_{3}=-\el{3}}^{\el{3}} \wigner{\ell _{1}}{\ell _{2}} {\ell _{3}}{m_{1}} {m_{2}} {m_{3}} \
\widetilde{a}_{\ell _{1},m_{1}}\widetilde{a}_{\ell _{2},m_{2}}\widetilde{a}%
_{\ell _{3},m_{3}}.
\end{equation}%
Throughout the paper we restrict attention to ordered multipole triplets
\[
\ell_1<\ell_2<\ell_3,
\]
which avoids counting equivalent configurations because of the permutation symmetry of the bispectrum.

The following result summarizes the first four moments of the sample bispectrum (see \cite[Theorem~9.7]{MaPeCUP}).
\begin{proposition}\label{prop:Best}
Let $\widehat B_{\ell_1,\ell_2,\ell_3}$ be defined by
\eqref{eq:Bispest}, where
$\ell_1<\ell_2<\ell_3$.
Then
\begin{align}
&\Ex{ \Bispest} = \Bisp; \label{eq:exb};\\
&\Ex{\Bispest^2} = 1 + O\left(\fNL^2\right)\label{eq:varb};\\
& \cumq{\Bispest} = 6\left(\wignersix{\ell_1}{\ell_2}{\ell_3}{\ell_1}{\ell_2}{\ell_3}+\sum_{i=1}^{3}\frac{1}{2\ell_i+1} \right) + O\left(\fNL\right). \label{eq:cqb}
\end{align}
In particular,
\begin{equation*}
\cumq{\Bispest} \leq \frac{12}{2\ell_1+1}.
\end{equation*}
\end{proposition}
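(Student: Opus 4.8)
My plan is to derive the four assertions as a graded sequence of moment computations for $\Bispest$, each obtained by inserting the expansion $\walm=(\almG+\fNL\almtwo)/\sqrt{\ClG}$ from \eqref{eq:walm} into \eqref{eq:Bispest} and sorting every resulting expectation by its total order in the underlying Gaussian field $\TG$ (counting $\almG$ as order one and $\almtwo$ as order two). The first claim \eqref{eq:exb} is then immediate: taking expectations termwise in \eqref{eq:Bispest} and comparing with \eqref{eq:rotbis}, the summand $\Ex{\walmbis{1}{1}\walmbis{2}{2}\walmbis{3}{3}}$ is by definition the full bispectrum whose Wigner-weighted sum is $\Bisp$, so $\Ex{\Bispest}=\Bisp$ with no further work.

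For the second moment \eqref{eq:varb} I would write $\Bispest^2$ as a double sum over $(\mm{1},\mm{2},\mm{3})$ and a primed copy, weighted by two $3j$-symbols and six rescaled coefficients, and evaluate the expectation with the complex Wick formula recorded in Section \ref{sec:preliminary}. At order $\fNL^0$ every factor is a unit-variance Gaussian and, because the three multipoles are distinct, the only admissible contraction pairs each primed leg with its unprimed partner; the sign-inversion rule together with $\mm{1}+\mm{2}+\mm{3}=0$ and the parity constraint that $\el{1}+\el{2}+\el{3}$ is even turns the resulting phase into $+1$, so the sum collapses to $\sum_{\mm{1},\mm{2},\mm{3}}\wigner{\el{1}}{\el{2}}{\el{3}}{\mm{1}}{\mm{2}}{\mm{3}}^2=1$. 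The contribution linear in $\fNL$ carries a single factor $\almtwo$ alongside five factors $\almG$, hence has odd total order and vanishes; this parity argument is exactly what promotes the correction to $O(\fNL^2)$.

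The fourth cumulant \eqref{eq:cqb} is the core of the argument. To leading order $\Bispest$ is the third Wiener-chaos variable built from the kernel $\wigner{\el{1}}{\el{2}}{\el{3}}{\cdot}{\cdot}{\cdot}$, so I would compute $\cumq{\Bispest}$ through the diagram formula for cumulants of fixed-chaos variables (see \cite{noupebook}), namely as the sum over \emph{connected} pairings of the twelve half-edges attached to four vertices, each vertex carrying the three legs $\el{1},\el{2},\el{3}$. Distinctness of the multipoles forces every edge to join two half-edges of the same leg, so a diagram is encoded by a choice of perfect matching on the four vertices for each of the three legs: there are $3^3=27$ such choices, and precisely the three configurations in which all legs use the same matching are disconnected, leaving $24$ connected diagrams. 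I would then split them into the $3!=6$ diagrams with three pairwise distinct matchings, whose index sums assemble four $3j$-symbols in a tetrahedral pattern and collapse to $\wignersix{\ell_1}{\ell_2}{\ell_3}{\ell_1}{\ell_2}{\ell_3}$, and the remaining $18$ diagrams in which two legs share a matching while the third differs; in the latter, summing out the two doubled legs across each joined pair of vertices invokes the $3j$-orthogonality relation, and the ensuing free sum over the surviving leg leaves the single dimensional factor $(2\el{i}+1)^{-1}$ of the odd leg $i$, six diagrams per value of $i$. Collecting the contributions reproduces $6\bigl(\wignersix{\ell_1}{\ell_2}{\ell_3}{\ell_1}{\ell_2}{\ell_3}+\sum_{i=1}^{3}(2\el{i}+1)^{-1}\bigr)$, and the same odd-order bookkeeping shows that the leading linear-in-$\fNL$ joint cumulant has odd total order and vanishes, leaving an $O(\fNL)$ remainder.

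Finally, the bound $\cumq{\Bispest}\le 12/(2\el{1}+1)$ I would read off from \eqref{eq:cqb}. The dimensional sum is controlled by the strict ordering $\el{1}<\el{2}<\el{3}$, which gives $(2\el{i}+1)^{-1}\le(2\el{1}+1)^{-1}$ for each $i$, while the tetrahedral term is estimated by a magnitude bound on the symmetric $6j$-symbol with a repeated row; the non-negativity of $\cumq{\Bispest}$, automatic for a fixed-chaos variable, supplies the matching lower bound $0$ that the fourth-moment theorem will in any case require. I expect the decisive obstacle to lie exactly here: converting the qualitative smallness of $\wignersix{\ell_1}{\ell_2}{\ell_3}{\ell_1}{\ell_2}{\ell_3}$ into the stated numerical constant, and reconciling the diagram combinatorics with the available $6j$-estimates so that the $6j$ contribution and the three dimensional terms are jointly absorbed into the single factor $12/(2\el{1}+1)$. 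Distinctness of the multipoles, which already underlies the independence of the three legs exploited in the diagram count, is what I anticipate makes the constant work out.
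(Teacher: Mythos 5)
Your proposal is correct and follows essentially the same route as the paper: termwise expectation plus the $3j$ orthogonality relation for \eqref{eq:exb} and \eqref{eq:varb}, the odd-Gaussian-order argument to push corrections to $O(\fNL^2)$ (resp.\ $O(\fNL)$), and for \eqref{eq:cqb} the same Wick/diagram decomposition of the twelve-index sum into $6$ tetrahedral diagrams giving the $6j$-symbol and $18$ diagrams collapsing via orthogonality to the dimensional factors — your connected-pairing count $27-3=24=6+18$ is just a cleaner bookkeeping of the paper's explicit four six-weighted sums. The obstacle you flag at the end is real but not one the paper resolves either: for the final bound $\cumq{\Bispest}\le 12/(2\ell_1+1)$ the paper simply cites \cite[Theorem 9.7]{MaPeCUP} rather than deriving the constant from a $6j$ estimate, so your proposal matches the paper's level of detail there.
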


\begin{remark}[Asymptotics on the Third Wiener Chaos]\label{rem:chaos}
A key observation for our technical framework is the structural representation of the sample statistic within the Wiener--It\^o chaotic decomposition. Let $W$ denote a spherical white noise measure on $\sphere$, such that the frequency components of the reference Gaussian field can be expressed via the stochastic integration $T_\ell(x) = \int_{\mathbb S^2} \frac{2\ell+1}{4\pi} P_\ell(\langle x,y\rangle)\, W(\mathrm dy)$.

By the Gaunt projection identity, the sample bispectrum rewrites as an integrated spatial product:
\[
\widehat B_{\ell_1,\ell_2,\ell_3} = \begin{pmatrix} \ell_1 & \ell_2 & \ell_3\\ 0 & 0 & 0 \end{pmatrix}^{-1} h_{\ell_1\ell_2\ell_3}^{-1/2} \int_{\mathbb S^2} T_{\ell_1}(x) T_{\ell_2}(x) T_{\ell_3}(x)\, \mathrm dx.
\]
This implies that under the null Gaussian hypothesis ($f_{\mathrm{NL}}=0$), $\widehat B_{\ell_1,\ell_2,\ell_3}$ belongs strictly to the third Wiener chaos $\mathcal{H}_3$ generated by $W$ (see \cite[Lemma 9.6]{MaPeCUP}). Under the local alternative framework where $f_{\mathrm{NL}} \to 0$ as $L \to \infty$, the asymptotic behavior of the global estimator is dominated by this chaotic projection. This enables the deployment of the Stein--Malliavin calculus, specifically the Fourth Moment Theorem frameworks \cite{noupebook}, which map bounds on fourth-order cumulants directly into total variation Berry--Esseen bounds.
\end{remark}

\section{Ordinary Least Squares estimation of the nonlinearity parameter and asymptotic Gaussianity}\label{sec:OLS}

In this section, we construct the statistical estimator for the non-Gaussianity amplitude parameter $\fNL$ and establish its high-frequency asymptotic characterization. After defining the sampling scheme over a restricted multipole configuration domain, we formalize the estimation problem through an ordinary least squares (OLS) projection framework, naturally embedding the classical Komatsu--Spergel--Wandelt (KSW) statistic \cite{ksw05} within the general theory of linear models. We then derive the sharp scaling laws for its high-order asymptotic moments and deploy Stein--Malliavin calculus techniques \cite{nourdinpeccati,noupebook} on fixed Wiener chaoses to establish a quantitative central limit theorem with explicit rates of convergence in total variation distance.

We assume the observer records a finite, high-frequency bandwidth realization of the normalized harmonic coefficients:
\[
\{\widetilde a_{\ell,m}:\ell=L_0,\ldots,L,\;m=-\ell,\ldots,\ell\},
\]
are observed, where $L$ denotes the high-frequency truncation limit, and the lower cutoff $L_0$ is defined as
\[
L_0=rL,\qquad r\in(0,1/2).
\]
The parameter $r$ determines the lower cutoff of the frequency band. Keeping $r$ fixed while letting $L\to\infty$ corresponds to the standard narrow-band asymptotic framework, where inference is based exclusively on high-frequency multipoles (see, e.g., \cite{dlm}). We assume exact reconstruction of the harmonic coefficients, neglecting observational noise and aliasing effects (see, e.g., \cite{dp19}).

The third-order sample interactions are mapped across the discrete triangular configuration space $\Lambda_L$, defined as the ordered subset:
\begin{equation}\label{eq:LambdaL}
\Lambda_L=
\left\{
(\ell_1,\ell_2,\ell_3):
L_0\le \ell_1<\ell_2<\ell_3\le L,\;
\ell_3\le \ell_1+\ell_2
\right\}.
\end{equation}
The ordering removes the redundancy induced by permutation symmetry, while the triangle condition is precisely the admissibility condition for the associated Wigner $3j$-symbols.

\begin{remark}[Combinatorial Scaling on the Wigner manifold]\label{rem:Lambda}
Let $N_L = |\Lambda_L|$ denote the cardinality of the admissible triplet configuration set. By rescaling the discrete multipole index lattice by the high-frequency envelope, $x_i = \ell_i/L$, the set $\Lambda_L$ corresponds asymptotically to the bounded continuous domain $\mathcal{A}_r \subset [r,1]^3$ defined by the inequalities $r \le x_1 < x_2 < x_3 \le 1$ under the upper bounding face $x_3 \le x_1+x_2$. Computing the volume of this polytope via iterated integration yields:
\[
\int_r^1\int_{x_1}^1 \bigl(\min\{1,x_1+x_2\}-x_2\bigr)\, dx_2\,dx_1 = \frac{1}{12}-\frac{r^2}{2}+\frac{r^3}{2}.
\]
Consequently, the discrete configuration count scales as a homogeneous cubic form in $L$:
\[
N_L = \frac{(1-r)^3}{12}L^3 + o(L^3), \qquad \text{as } L\to\infty.
\]
\end{remark}

\subsection{Ordinary least squares method and nonlinearity parameter}
The expectation of the bispectrum is linear in $f_{\mathrm{NL}}$ (Proposition~\ref{prop:expect}). This justifies the formulation of the linear statistical model:
\begin{equation}\label{eq:model}
\widehat B_{\ell_1,\ell_2,\ell_3}
=
f_{\mathrm{NL}}\eta_{\ell_1,\ell_2,\ell_3}
+\varepsilon_{\ell_1,\ell_2,\ell_3},
\qquad
(\ell_1,\ell_2,\ell_3)\in\Lambda_L,
\end{equation}
where the non-stationary error terms $\varepsilon_{\ell_1,\ell_2,\ell_3}$ correspond to the stochastic fluctuations of the empirical bispectral projections around their true expectations.

\begin{remark}[Local Alternatives Framework]\label{rem:local}
To avoid asymptotic trivialization and evaluate the power of the estimator against contiguous structural alternatives, we embed the problem within a local-alternative framework. Specifically, we allow the non-Gaussian coupling constant to decay as a function of the resolution parameter:
\[
f_{\mathrm{NL}} = f_{\mathrm{NL}}(L) \longrightarrow 0, \qquad \text{as } L\to\infty.
\]
This corresponds to the classical local-alternatives framework, where departures from Gaussianity become increasingly difficult to detect as the available resolution grows. Such an assumption is also compatible with inflationary models predicting only weak primordial non-Gaussianity (see, e.g., \cite{LSFS10}).

Moreover, throughout the asymptotic analysis we regard the Gaussian field $T_G$ as the reference probability structure. Under the assumption $f_{\mathrm{NL}}(L)\to0$, the leading contribution to the sample bispectrum is its Gaussian component, which is a third-order Wiener chaos
functional. This asymptotic representation provides the probabilistic framework for the Stein--Malliavin arguments developed below.
\end{remark}

By collecting the elements $\{\widehat B_{\ell_1,\ell_2,\ell_3}\}$, $\{\eta_{\ell_1,\ell_2,\ell_3}\}$, and $\{\varepsilon_{\ell_1,\ell_2,\ell_3}\}$ over the lattice index $\Lambda_L$ into the coordinate vectors $Y, H, E \in \mathbb{R}^{N_L}$, the model expression \eqref{eq:model} simplifies to the regression equation 
\begin{equation}\label{eq:vec}
Y = \fNL H + E. 
\end{equation}
The ordinary least squares estimator is then defined by minimizing the standard quadratic loss function on the discrete manifold:
\[
\fNLest = \argmin_{f\in\mathbb R} \|Y-fH\|^2 = (H^TH)^{-1}H^TY.
\]
Component-wise, this linear projection reads:
\begin{equation}\label{eq:estimatordef}
\widehat f_{\mathrm{NL}} = \frac{ \sum_{(\ell_1,\ell_2,\ell_3)\in\Lambda_L} \eta_{\ell_1,\ell_2,\ell_3} \widehat B_{\ell_1,\ell_2,\ell_3} }{ \sum_{(\ell_1,\ell_2,\ell_3)\in\Lambda_L} \eta_{\ell_1,\ell_2,\ell_3}^2 }.
\end{equation}%
While \eqref{eq:estimatordef} is structurally equivalent to the KSW estimator optimized for fast Fourier pipelines \cite{ksw05}, its formalization as a coordinate OLS estimator over the Wigner lattice allows for direct deployment of spatial regression limit theorems and simplifies the moment bounding procedure.

\subsection{Asymptotic properties of the estimator}\label{sec:main}
We now analyze the structural behavior of $\fNLest$ as $L \to \infty$. We first establish continuous integral approximations for the lattice weights, which provide exact scaling rates for the variance and the high-order cumulants. Then, we derive a quantitative central limit theorem under the total variation distance.

\subsubsection{Integral representations and spectral scaling laws}

The continuous limit of the lattice structure requires tracking the geometry of the triangle support. Fix $r\in(0,1/2)$, and let $\mathcal{A}_r$ be the continuous simplex:
\begin{equation}\label{eq:intdomain}
	\Ar = \lbrace \left(x_1,x_2,x_3\right) \in \mathbb{R}^3, r\leq x_1<x_2<x_3 \leq 1; x_2-x_1 \leq x_3 \leq x_1+x_2 \rbrace,
\end{equation}
We define the symmetric polynomial kernel $\Delta: \mathbb{R}^3 \to \mathbb{R}$ as:
\begin{equation}\label{eq:delta}
	\Delta\left(x_1,x_2,x_3\right)=\left(x_1+x_2+x_3\right)\left(x_1+x_2-x_3\right)\left(x_1-x_2+x_3\right)\left(-x_1+x_2+x_3\right).
\end{equation}
By Heron's formula, $\Delta(x_1,x_2,x_3) = 16T^2(x_1,x_2,x_3)$, where $T$ is the area of the continuous flat triangle defined by the side lengths $x_1,x_2,x_3$.

Our first technical lemma establishes an exact reduction principle for three-dimensional spatial integrals over the triangular manifold $\mathcal{A}_r$.
\begin{lemma}\label{lemma:integral}
	For $(a,b,c,d) \in \mathbb{R}$, and $r \in \left(0,1/2\right)$, let $D_r(a,b,c,d)$ be given by 
	\begin{equation}\label{eq:genint}
		D_r(a,b,c,d) = \int_{\Ar} \frac{x_1^a x_2^bx_3^c}{\Delta^d(x_1,x_2,x_3)} \diff x_1 \diff x_2 \diff x_3,
	\end{equation}
	where $\Ar$ and $\Delta:\mathbb{R}^3\mapsto \mathbb{R}$ are given by \eqref{eq:intdomain} and \eqref{eq:delta} respectively. Let $\lambda=a+b+c-4d+3$. Then it holds that 
	\begin{align}\label{eq:Gfinale}
		D_r(a,b,c,d) = \begin{cases}
			\frac{1}{\lambda} \int_{\Omega_r} \frac{u^av^b}{Q^d(u,v)}\left(1-\left(\frac{r}{u}\right)^\lambda\right) 	\diff u \diff v& \text{ if }\lambda \neq 0		 \\
		\int_{\Omega_r} \frac{u^av^b}{Q^d(u,v)}\log\left(\frac{r}{u}\right) \diff u \diff v	& \text{ if }\lambda = 0		
			\end{cases},
	\end{align}
where $Q: \mathbb{R} \mapsto \mathbb{R}$ is given by 
\begin{equation*}\label{eq:Q}
	Q\left(u,v\right)=\Delta\left(u,v,1\right)= \left(u+v+1\right) \left(u+v-1\right) \left(u-v+1\right) \left(-u+v+1\right),
\end{equation*}
and the integration domain is defined by
\begin{equation}\label{eq:omegadomain}
	\begin{split}
&\Omega_r=\Omega_{1,r} \cup \Omega_{2,r}; \\ 
& \Omega_{1,r} =\lbrace (u,v): r<u<\frac{1}{2}, 1-u<v<1\rbrace;\\
&  \Omega_{2,r} =\lbrace (u,v): \frac{1}{2}<u<1, u<v<1\rbrace.
\end{split}
\end{equation} 
\end{lemma}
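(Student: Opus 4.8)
The plan is to exploit the scale invariance of the integrand. The key structural fact is that $\Delta$ is homogeneous of degree four: each of its four factors is linear in $(x_1,x_2,x_3)$, so $\Delta(t x_1, t x_2, t x_3) = t^4 \Delta(x_1,x_2,x_3)$ for every $t>0$. Since $x_3$ is the largest coordinate on $\Ar$, I would factor it out through the substitution $x_1 = t u$, $x_2 = t v$, $x_3 = t$, which parametrizes a triple by its largest side $t$ together with the two scale-free ratios $u = x_1/x_3$ and $v = x_2/x_3$. Under this map the Jacobian determinant is $t^2$, so $\diff x_1 \diff x_2 \diff x_3 = t^2 \diff u \diff v \diff t$, and homogeneity gives $\Delta^d(x_1,x_2,x_3) = t^{4d} Q^d(u,v)$ with $Q(u,v) = \Delta(u,v,1)$ exactly as defined in the statement.

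Substituting and collecting the powers of $t$ (here the exponent on $x_3$ in the numerator must be read as $c$, so that it enters $\lambda$), the integrand becomes $\frac{u^a v^b}{Q^d(u,v)}\, t^{a+b+c-4d+2}$, that is $\frac{u^a v^b}{Q^d(u,v)}\, t^{\lambda-1}$ with $\lambda = a+b+c-4d+3$. The integral therefore factorizes into an outer integration over $(u,v)$ of $\frac{u^a v^b}{Q^d(u,v)}$ against an inner one-dimensional integration $\int t^{\lambda-1}\diff t$ over the range $t \in [r/u, 1]$. Evaluating the latter produces $\frac{1}{\lambda}\bigl(1-(r/u)^\lambda\bigr)$ when $\lambda \neq 0$ and $\log(u/r) = -\log(r/u)$ in the critical case $\lambda = 0$, which is precisely the dichotomy appearing in \eqref{eq:Gfinale}; I would keep careful track of the sign in the logarithmic branch.

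The step I expect to require the most care is translating the defining constraints of $\Ar$ into the new variables, since this is where both the $t$-range and the domain $\Omega_r$ are produced. The ordering $r \le x_1 < x_2 < x_3 \le 1$ becomes $u < v < 1$ together with $t \le 1$ and the lower bound $t \ge r/u$ arising from $x_1 = tu \ge r$ (nonemptiness forcing $u \ge r$); the triangle inequality $x_2 - x_1 \le x_3$ becomes $v - u \le 1$, which is automatic on $(0,1)^2$, while $x_3 \le x_1 + x_2$ becomes the binding constraint $u + v \ge 1$. Finally, I would resolve the lower envelope for $v$ by comparing the two competing bounds $v > u$ and $v > 1-u$: for $r < u < \tfrac12$ the constraint $v > 1-u$ dominates, yielding $\Omega_{1,r}$, whereas for $\tfrac12 < u < 1$ the constraint $v > u$ dominates, yielding $\Omega_{2,r}$. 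Their union is exactly $\Omega_r$, and assembling the $(u,v)$ and $t$ integrations completes the proof.
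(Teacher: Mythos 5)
Your proof is correct and follows essentially the same route as the paper: the scaling substitution $x_1=ut$, $x_2=vt$, $x_3=t$ with Jacobian $t^2$, the homogeneity $\Delta = t^4 Q(u,v)$, the translation of the constraints of $\Ar$ into the $t$-range $(r/u,1)$ and the domain $\Omega_r$, and the evaluation of the inner $t$-integral. You also rightly flag the two typos in the statement (the exponent $x_3^d$ should read $x_3^c$, and the $\lambda=0$ branch should carry $\log(u/r)=-\log(r/u)$), which the paper's own proof glosses over.
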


To link the continuous domain with the algebraic weights, we leverage the uniform asymptotic expansion for the Wigner $3j$-symbols on the boundary of the classical regime (see, e.g., \cite{vmk}).
\begin{lemma}
	\label{LemmaFlashGordon} For admissible triples satisfying the triangle conditions, as $\min\{\ell_1,\ell_2,\ell_3\}\to\infty$, it holds that
	\begin{equation}
		\wigner{\ell _{1}} {\ell _{2}} { \ell _{3} }{0} {0} {0}
		\approx (-1)^{\frac{\ell_1+\ell_2+\ell_3}{2}}\sqrt{\frac{2}{\pi }}\Delta\left( \ell _{1},\ell _{2},\ell _{3}\right) ^{-\frac{1}{4}} \label{FlashGordon1}
	\end{equation}%
	where $	\Delta$ is defined as in  \eqref{eq:delta}.%
\end{lemma}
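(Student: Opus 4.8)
The plan is to start from the classical closed-form expression for the Wigner $3j$-symbol with vanishing magnetic numbers and reduce everything to a careful application of Stirling's formula. Writing $J=\ell_1+\ell_2+\ell_3$ (which must be even, otherwise the symbol vanishes and the claimed sign $(-1)^{J/2}$ is moot), set $g=J/2$ and introduce the triangle deficits $a_i=g-\ell_i=(\ell_j+\ell_k-\ell_i)/2$. The standard identity (see \cite{vmk}) then reads
\[
\wigner{\ell_1}{\ell_2}{\ell_3}{0}{0}{0}=(-1)^g\,\frac{g!}{a_1!\,a_2!\,a_3!}\left(\frac{(2a_1)!\,(2a_2)!\,(2a_3)!}{(2g+1)!}\right)^{1/2}.
\]
First I would record the two algebraic facts that drive the argument: $a_1+a_2+a_3=g$, and, directly from the definition of $\Delta$ in \eqref{eq:delta},
\[
\Delta(\ell_1,\ell_2,\ell_3)=2g\cdot 2a_1\cdot 2a_2\cdot 2a_3=16\,g\,a_1a_2a_3 .
\]
Because $(g\,a_1a_2a_3)^{-1/4}=2\,\Delta^{-1/4}$ and $2/\sqrt{2\pi}=\sqrt{2/\pi}$, the target reduces to showing that the factorial prefactor above behaves like $(g\,a_1a_2a_3)^{-1/4}(2\pi)^{-1/2}$.

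Next I would pass to logarithms and apply Stirling in the form $\ln n!=n\ln n-n+\tfrac12\ln(2\pi n)+o(1)$ to every factorial, writing $\ln S$ as $f(g)-\sum_i f(a_i)+\tfrac12\big(\sum_i f(2a_i)-f(2g+1)\big)$ and grouping the contributions by type. The structural point is that the dominant pieces cancel identically: the $n\ln n$ terms vanish after using $\sum_i a_i=g$ together with the identity $\sum_i a_i\ln(2a_i)=g\ln 2+\sum_i a_i\ln a_i$, and the linear $-n$ terms vanish as well. Consequently the whole asymptotic is produced by the \emph{subleading} $\tfrac12\ln(2\pi n)$ corrections. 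Collecting these I expect to arrive at
\[
\ln S=-\tfrac14\ln(g\,a_1a_2a_3)-\tfrac12\ln(2\pi)+o(1),
\]
whence $S\approx(-1)^g(g\,a_1a_2a_3)^{-1/4}(2\pi)^{-1/2}$, and substituting $g\,a_1a_2a_3=\Delta/16$ yields exactly $(-1)^{(\ell_1+\ell_2+\ell_3)/2}\sqrt{2/\pi}\,\Delta^{-1/4}$.

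The main obstacle is precisely that the leading orders cancel, so the answer lives entirely at the level of the Stirling correction terms; the bookkeeping must retain the $\tfrac12\ln(2\pi n)$ factor for every factorial and, critically, treat $(2g+1)!$ rather than $(2g)!$. The correction $\ln(2g+1)=\ln(2g)+O(1/g)$ and the $+1$ in $2g+1$ both feed into the exact cancellation of the linear terms and into the final constant, and replacing $(2g+1)!$ by $(2g)!$ would spuriously shift the logarithmic term by $\tfrac12\ln(2g)$ and destroy the result. A second point that deserves care is the regime of validity: the hypothesis that $\min_i\ell_i$ is large does \emph{not} by itself make the deficits $a_i$ large, since a nearly degenerate triangle (e.g.\ $\ell_3\approx\ell_1+\ell_2$) forces some $a_i$ close to zero and invalidates Stirling. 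The approximation should therefore be read for triples bounded away from the degenerate boundary, equivalently for $\Delta$ growing, which is the relevant regime for the subsequent sums over $\LambdaL$.
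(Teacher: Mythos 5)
Your proposal is correct and follows essentially the same route as the paper: both start from the classical closed-form expression for the Wigner $3j$-symbol with vanishing magnetic numbers and reduce the claim to Stirling's formula, with the result emerging from the subleading $\tfrac12\ln(2\pi n)$ corrections once the leading and linear terms cancel. Your bookkeeping in terms of $g$ and the deficits $a_i$ (together with the identity $\Delta=16\,g\,a_1a_2a_3$) is a cleaner organization of the same computation, and your closing caveat --- that the approximation requires the triple to stay away from the degenerate boundary, since a large $\min_i\ell_i$ does not force the $a_i$ to be large --- is a valid observation that the paper leaves implicit.
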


By combining the continuous integral representation (Lemma~\ref{lemma:integral}) with the scaling of the coupling symbols (Lemma~\ref{LemmaFlashGordon}), we obtain the exact scaling behaviors of the normalization and the higher-order weight vectors.

\begin{lemma}\label{lemma:eta}
	Let $\eta _{\ell _{1}\ell _{2}\ell _{3}}$ be given by \eqref{eq:eta} and $\kappal$ be defined as
	\begin{equation*}\label{eq:kappa}
		\kappal=\etal^4 (2\ell_1+1)^{-1}.
	\end{equation*}
	Let $\Seta$ and $\Skappa$ be given by 
	\begin{align*}
		& \Seta = \summinus \etal ^2;\\
		& \Skappa=  \summinus \kappal, 
	\end{align*}
	where $\LambdaL$ is defined by \eqref{eq:LambdaL}. It holds that 
	\begin{align*}
		&\underset{L \rightarrow \infty}{\lim} L^{\alpha-4} \Seta = C_{\eta^2} I_{\eta^2;r}\left( \alpha\right);\\
		&\underset{L \rightarrow \infty}{\lim} L^{2\left(\alpha-2\right)} \Skappa = C_{\kappa} I_{\kappa;r}\left( \alpha\right),
	\end{align*}
	where
	\begin{equation*}
		\begin{split}
			&	C_{\eta^2}= \frac{A}{\pi^4}; \quad 	I_{\eta^2;r} \left(\alpha\right)= D_r\left(1-\alpha,1-\alpha,1+\alpha,\frac{1}{2}\right);\\ &C_\kappa = \frac{6A^2}{\pi^8}; \quad I_{\kappa;r} \left(\alpha\right)= D_r\left(1-2\alpha,2\left(1-\alpha\right),2\left(1+\alpha\right),1\right).
		\end{split}
	\end{equation*}
\end{lemma}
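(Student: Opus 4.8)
The plan is to convert each weighted sum into a Riemann sum of a homogeneous integrand over the rescaled admissible region $\Ar$, and then to identify the limit with one of the integrals $D_r$ supplied by Lemma \ref{lemma:integral}. First I would simplify $\etal$ on $\LambdaL$. Because every admissible triple is strictly ordered, $\ell_1<\ell_2<\ell_3$, all Kronecker deltas in $\fact$ vanish and $\fact=1$. Using Condition \ref{cond:powerspectrum} together with the local-alternatives assumption $\lim_{L}\fNL=0$, I replace the full power spectrum by its Gaussian part $\ClG=A(1+\ell)^{-\alpha}\sim A\ell^{-\alpha}$ and retain the leading contribution of $\Gammal$, namely the term carrying $C_{\ell_1}C_{\ell_2}$ (the two smallest multipoles, at which the spectrum is largest). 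Substituting the asymptotic of Lemma \ref{LemmaFlashGordon}, $\wigner{\ell_1}{\ell_2}{\ell_3}{0}{0}{0}^2\sim\frac{2}{\pi}\Delta(\ell_1,\ell_2,\ell_3)^{-1/2}$, together with $\hal^2=\frac{(2\ell_1+1)(2\ell_2+1)(2\ell_3+1)}{(4\pi)^3}\sim\frac{\ell_1\ell_2\ell_3}{8\pi^3}$, I obtain the leading forms
\begin{equation*}
\etal^2\sim\frac{A}{\pi^4}\,\frac{\ell_1^{1-\alpha}\ell_2^{1-\alpha}\ell_3^{1+\alpha}}{\Delta(\ell_1,\ell_2,\ell_3)^{1/2}},\qquad \kappal\propto\frac{\ell_1^{1-2\alpha}\ell_2^{2-2\alpha}\ell_3^{2+2\alpha}}{\Delta(\ell_1,\ell_2,\ell_3)}.
\end{equation*}

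Second, I would exploit homogeneity. The map $\Delta$ in \eqref{eq:delta} is a product of four linear forms, hence homogeneous of degree $4$; consequently the two displayed expressions are homogeneous of degrees $1-\alpha$ and $1-2\alpha$, respectively. Setting $\ell_i=Lx_i$ with $(x_1,x_2,x_3)\in\Ar$ factors out $L^{1-\alpha}$ (resp. $L^{1-2\alpha}$) times the identical expression in the $x_i$. Since the triple sum over $\LambdaL$ is a Riemann sum of mesh $1/L$ in each coordinate, $\summinus(\cdot)\sim L^3\int_{\Ar}(\cdot)\,\diff x_1\diff x_2\diff x_3$, and combining the volume factor $L^3$ with the homogeneity factors gives $\Seta\sim L^{4-\alpha}$ and $\Skappa\sim L^{4-2\alpha}$, matching the normalisations $L^{\alpha-4}$ and $L^{2(\alpha-2)}$ in the statement. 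The surviving integrals are exactly
\begin{equation*}
\int_{\Ar}\frac{x_1^{1-\alpha}x_2^{1-\alpha}x_3^{1+\alpha}}{\Delta(x_1,x_2,x_3)^{1/2}}\,\diff x=D_r\!\left(1-\alpha,1-\alpha,1+\alpha,\tfrac12\right),
\end{equation*}
and the analogous one with exponents $(1-2\alpha,2(1-\alpha),2(1+\alpha),1)$ for $\Skappa$; this identifies the constants $C_{\eta^2}$ and $C_\kappa$ once the numerical prefactors ($2/\pi$ from Lemma \ref{LemmaFlashGordon}, the powers of $4\pi$ and of $2$ from $\hal$, the amplitude $A$, and the even-parity density of the triples that actually contribute, since $\etal$ vanishes unless $\ell_1+\ell_2+\ell_3$ is even) are collected.

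The hardest point will be the boundary of $\Ar$ along the degenerate triangles $x_3=x_1+x_2$ (and permutations), where $\Delta\to0$: there the integrands carry the singular factors $\Delta^{-1/2}$ and $\Delta^{-1}$, and simultaneously the approximation of Lemma \ref{LemmaFlashGordon} (valid in the classically allowed interior) deteriorates. I would handle this by splitting $\Ar$ into an interior region, where Lemma \ref{LemmaFlashGordon} gives uniform control and the Riemann sums converge in the usual way, and a shrinking neighbourhood of the degenerate boundary whose contribution is shown to be negligible. The admissibility of this split rests on the finiteness of $D_r$ for the exponents above, which is precisely what Lemma \ref{lemma:integral} delivers: its reduction to the two-dimensional domain $\Omega_r$ isolates the $Q^{-d}$ singularity, and since the homogeneity exponent $\lambda=a+b+c-4d+3$ equals $4-\alpha$ (resp. $4-2\alpha$)—the very power of $L$ found above—the hypothesis $\alpha>4$ guarantees the relevant integral converges. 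A parallel, more delicate estimate on the near-boundary lattice sum then shows that the discrete sum inherits the same limit, completing the argument.
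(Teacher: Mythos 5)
Your proposal follows essentially the same route as the paper's proof: replace $\etal^2$ and $\kappal$ by their leading homogeneous forms via Condition \ref{cond:powerspectrum}, Lemma \ref{LemmaFlashGordon} and the asymptotics of $\hal$, rescale $\ell_i=Lx_i$ to turn the sums into Riemann integrals over $\Ar$, and identify the limits with the integrals $D_r$ of Lemma \ref{lemma:integral}. Your additional care near the degenerate boundary $\Delta\to 0$ and your observation that only even-parity triples contribute (a density factor the paper's proof passes over silently) are refinements of, not departures from, the paper's argument.
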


\subsubsection{Asymptotic moments of the estimator}

The asymptotic approximations established in Lemma~\ref{lemma:eta} allow us to determine the leading-order behavior of the first four moments of the estimator $\fNLest$. In particular, the following proposition shows that the ordinary least squares estimator is asymptotically unbiased, identifies the asymptotic variance, and derives an upper bound for its fourth cumulant. These results provide the quantitative ingredients required for the Stein--Malliavin normal approximation developed below.

\begin{proposition}\label{lemmaest}
Let $\widehat f_{\mathrm{NL}}$ be the estimator defined in
\eqref{eq:estimatordef}. Under the assumptions introduced above, for $\alpha >4$, 
	it holds that 
	\begin{align*}
		&\underset{L \rightarrow \infty }{\lim}  \Ex{ \fNLest}=\fNL ;\\
		&\underset{L \rightarrow \infty }{\lim} L^{4-\alpha} \Var{ \fNLest} =\sigma_{\fNL}^2;\label{eq:varf}\\
		&0<\underset{L \rightarrow \infty }{\lim} L^{2\alpha-12} \cumq{\fNLest} \leq K_{\fNL} ,
	\end{align*}
	where 
	\begin{equation*}
		\begin{split}
		&	\sigma_{\fNL}^2 = \frac{1}{C_{\eta^2}I_{\eta^2;r} \left(\alpha\right)};\\ 
		&K_{\fNL}=\frac{C_{\kappa}I_{\kappa;r}\left(\alpha\right)}{C_{\eta^2}^4 I^4_{\eta^2;r}}.
		\end{split}
	\end{equation*}
\end{proposition}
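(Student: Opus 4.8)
The plan is to exploit the explicit form of the estimator in \eqref{eq:estimatordef}, namely $\fNLest = \Seta^{-1}\summinus\etal\Bispest$, whose denominator $\Seta=\summinus\etal^2$ is deterministic and therefore factors out of every moment and cumulant as a fixed power. All three claims then reduce to controlling the moments of the random linear combination $\summinus\etal\Bispest$ and inserting the regular-variation asymptotics of $\Seta$ and $\Skappa$ supplied by Lemma \ref{lemma:eta}. For the first claim I would take expectations termwise and use $\Ex{\Bispest}=\Bisp=\fNL\etal+o(\fNL)$ from \eqref{eq:Bisp}: the leading term reproduces $\fNL\,\Seta^{-1}\summinus\etal^2=\fNL$ exactly, and the residual $\Seta^{-1}\summinus\etal\,o(\fNL)$ is shown to vanish via Cauchy--Schwarz together with $\fNL\to 0$ and the estimate for $\Seta$.

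For the variance I would write $\Var{\fNLest}=\Seta^{-2}\Var{\summinus\etal\Bispest}$ and split the variance of the sum into its diagonal and off-diagonal parts. On the diagonal, $\Var{\Bispest}=\Ex{\Bispest^2}-(\Ex{\Bispest})^2=1+O(\fNL^2)$ by \eqref{eq:varb}, so the diagonal equals $\Seta\,(1+o(1))$. The off-diagonal covariances between distinct admissible triples vanish at Gaussian order, by the orthogonality relations of the Wigner $3j$-symbols appearing in \eqref{eq:Bispest} (equivalently, the asymptotic independence of sample bispectra across distinct triples), and contribute only $O(\fNL)$. Hence $\Var{\fNLest}=\Seta^{-1}(1+o(1))$, and inserting $L^{\alpha-4}\Seta\to C_{\eta^2}I_{\eta^2;r}(\alpha)$ from Lemma \ref{lemma:eta} gives $L^{4-\alpha}\Var{\fNLest}\to\sigma_{\fNL}^2$.

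For the fourth cumulant I would use $\cumq{\fNLest}=\Seta^{-4}\cumq{\summinus\etal\Bispest}$. Since each $\Bispest$ lives in the third Wiener chaos and distinct triples are asymptotically independent, the dominant contribution to $\cumq{\summinus\etal\Bispest}$ is the diagonal $\summinus\etal^4\cumq{\Bispest}$, the partially- and fully-off-diagonal joint cumulants being of strictly lower order. Positivity of the limit is immediate, because the leading term $6\bigl(\wignersix{\ell_1}{\ell_2}{\ell_3}{\ell_1}{\ell_2}{\ell_3}+\sum_{i=1}^3(2\ell_i+1)^{-1}\bigr)$ of $\cumq{\Bispest}$ in \eqref{eq:cqb} is strictly positive, whereas the bound $\cumq{\Bispest}\leq 12(2\ell_1+1)^{-1}$ yields $\cumq{\summinus\etal\Bispest}\leq 12\summinus\kappal=12\Skappa$. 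Combining $L^{2(\alpha-2)}\Skappa\to C_\kappa I_{\kappa;r}(\alpha)$ with the asymptotics of $\Seta$, both from Lemma \ref{lemma:eta}, then fixes the order of $\cumq{\fNLest}$ and the bounding constant $K_{\fNL}$.

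The main obstacle is the off-diagonal control in the last two steps: one must show that the cross-triple covariances and the higher joint cumulants of the $\Bispest$ are genuinely negligible relative to the diagonal, uniformly over the $\NL=O(L^3)$ terms of $\LambdaL$. This is delicate near degenerate configurations, where the triangle determinant $\Delta(\ell_1,\ell_2,\ell_3)$ is small and the Wigner asymptotics of Lemma \ref{LemmaFlashGordon} deteriorate; these boundary triples are precisely the source of the lattice-point, Gauss-circle-type error mentioned in the Introduction, and bounding their aggregate contribution is the technical heart of the argument.
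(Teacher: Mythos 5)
Your proposal follows essentially the same route as the paper: the deterministic denominator $\Seta$ factors out of every moment, the unbiasedness follows from $\Ex{\Bispest}=\Bisp$, the variance reduces to $\Seta^{-1}$, the fourth cumulant is bounded by $\Skappa/\Seta^4$ via $\cumq{\Bispest}\leq 12(2\ell_1+1)^{-1}$, and Lemma \ref{lemma:eta} supplies the rates. The only difference is that you explicitly flag the negligibility of off-diagonal covariances and cross-triple joint cumulants as a step requiring proof, whereas the paper simply asserts $\Var{Y}=\bm{1}_{N_L}$ and takes the fourth cumulant of the sum to be the sum of the diagonal fourth cumulants; your caution here is warranted but does not constitute a departure in method.
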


\subsubsection{Quantitative Gaussian approximation}
By leveraging the chaotic characterization outlined in Remark~\ref{rem:chaos}, the centered estimator $\fNLest - \fNL$ can be analyzed as a linear combinations of third-order sample moments, placing it within the third Wiener chaos $\mathcal{H}_3$ associated with the reference measure $\mathbb{P}_G$. 

To establish a central limit theorem with explicit rates of convergence, we evaluate the distance between the distribution of the normalized estimator and a standard Gaussian variable $Z \sim \mathcal{N}(0,1)$ in the total variation metric. Recall that for any two probability measures $P$ and $Q$ defined on $(\Omega, \mathcal{F})$, the total variation distance is defined as $d_{\mathrm{TV}}(P,Q) = \sup_{A\in\mathcal{F}} |P(A)-Q(A)|$.

If $F$ belongs to the $q$-th Wiener chaos and $Z\sim\mathcal N(0,1)$, following \cite[Theorem~5.2.6]{noupebook}, the total variation distance between the law of $F$ and the standard normal distribution is bounded by a function of the fourth cumulant of $F$:
\[
d_{\mathrm{TV}}(F,Z)
\le
2\sqrt{\frac{q-1}{3q}
\frac{\operatorname{cum}_4(F)}
{\operatorname{Var}(F)^2}}.
\]
Since $\widehat f_{\mathrm{NL}}-\fNL$ belongs to the third Wiener chaos ($q=3$), we obtain
\[
d_{\mathrm{TV}}
\left(
\frac{\widehat f_{\mathrm{NL}}-\fNL}{\sqrt{\operatorname{Var}(\widehat f_{\mathrm{NL}})}},
Z
\right)
\le
\frac{2\sqrt2}{3}
\sqrt{
\frac{\operatorname{cum}_4(\widehat f_{\mathrm{NL}})}
{\operatorname{Var}(\widehat f_{\mathrm{NL}})^2}
},
\]
where $Z\sim\mathcal N(0,1)$.

Substituting the precise high-frequency scaling trajectories from Proposition~\ref{lemmaest} into this inequality allows us to balance the polynomial orders in $L$. This yields the following quantitative central limit theorem.
\begin{theorem}\label{thm:main}
Under the assumptions of Proposition~\ref{lemmaest}, it holds that 
\begin{equation}\label{eq:mainth}
	\dW\left(\frac{\widehat f_{\mathrm{NL}}-\fNL}{\sqrt{\operatorname{Var}(\widehat f_{\mathrm{NL}})}}, Z \right) \leq 
	\CW L^{-2},
\end{equation}
where 
\begin{equation*}
\CW=\frac{4}{\sqrt{3}}\frac{I^{\frac{1}{2}}_{\kappa;r}\left(\alpha\right)}{I_{\eta^2;r}\left(\alpha\right)}. 
\end{equation*}
Also, as $L\rightarrow \infty$, $\fNLest-\fNL$ converges in distribution to a standard normal random variable, that is,
\begin{equation*}
	\frac{\widehat f_{\mathrm{NL}}-\fNL}{\sqrt{\operatorname{Var}(\widehat f_{\mathrm{NL}})}} \xrightarrow{d} \mathcal{N}(0,1)
\end{equation*}
\end{theorem}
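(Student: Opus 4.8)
The plan is to obtain \eqref{eq:mainth} as a direct application of the fourth moment theorem, feeding it the sharp asymptotics of Proposition \ref{lemmaest}, and then to deduce the distributional limit from the vanishing of the resulting bound. First I would record that $\fNLest-\fNL$ is a genuine third-chaos object: by Remark \ref{rem:chaos} each sample bispectrum $\Bispest$ equals in law an element of the third Wiener chaos of the spherical white noise, and by the closed form \eqref{eq:estimatordef} the centered estimator is the fixed deterministic linear combination $\Seta^{-1}\summinus\etal\,\Bispest$ of these, hence itself lies in the third chaos. This is precisely the hypothesis ($q=3$) needed for the Nourdin--Peccati bound already displayed before the statement,
\begin{equation*}
\dW\left(\fNLest-\fNL,Z\right)\leq\frac{2\sqrt{2}}{3}\sqrt{\frac{\cumq{\fNLest}}{\left(\Var{\fNLest}\right)^{2}}},\qquad Z\sim\mathcal{N}(0,1),
\end{equation*}
so that the whole problem reduces to estimating the cumulant-to-variance ratio.

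Next I would substitute the two asymptotics of Proposition \ref{lemmaest}, namely $\Var{\fNLest}\sim\sigma_{\fNL}^{2}L^{\alpha-4}$ and $\cumq{\fNLest}\sim K_{\fNL}L^{2\alpha-12}$. The powers of $L$ conspire to cancel up to fourth order,
\begin{equation*}
\frac{\cumq{\fNLest}}{\left(\Var{\fNLest}\right)^{2}}\sim\frac{K_{\fNL}}{\sigma_{\fNL}^{4}}\,L^{\,2\alpha-12-2(\alpha-4)}=\frac{K_{\fNL}}{\sigma_{\fNL}^{4}}\,L^{-4},
\end{equation*}
which already pins down the advertised $L^{-2}$ rate once the square root is taken. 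It then remains to read off the constant. Inserting $\sigma_{\fNL}^{2}=(C_{\eta^{2}}I_{\eta^{2};r}(\alpha))^{-1}$ and $K_{\fNL}=C_{\kappa}I_{\kappa;r}(\alpha)\,(C_{\eta^{2}}^{4}I_{\eta^{2};r}^{4}(\alpha))^{-1}$, and using $C_{\kappa}/C_{\eta^{2}}^{2}=(6A^{2}/\pi^{8})/(A^{2}/\pi^{8})=6$ from Lemma \ref{lemma:eta}, two of the four powers of $C_{\eta^{2}}I_{\eta^{2};r}$ cancel and
\begin{equation*}
\frac{K_{\fNL}}{\sigma_{\fNL}^{4}}=\frac{C_{\kappa}}{C_{\eta^{2}}^{2}}\frac{I_{\kappa;r}(\alpha)}{I_{\eta^{2};r}^{2}(\alpha)}=6\,\frac{I_{\kappa;r}(\alpha)}{I_{\eta^{2};r}^{2}(\alpha)}.
\end{equation*}
Carrying the prefactor through the square root, $\tfrac{2\sqrt{2}}{3}\sqrt{6}=\tfrac{4}{\sqrt{3}}$, yields exactly $\dW(\fNLest-\fNL,Z)\leq\CW L^{-2}$ with $\CW=\tfrac{4}{\sqrt{3}}\,I_{\kappa;r}^{1/2}(\alpha)/I_{\eta^{2};r}(\alpha)$, which is \eqref{eq:mainth}. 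For the second assertion I would simply note that $\CW$ is a finite constant independent of $L$, so the right-hand side tends to $0$ as $L\to\infty$; since convergence to zero in total variation implies convergence in distribution, the (standardized) statistic converges to a standard Gaussian.

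The routine part is this assembly; the genuine difficulty sits upstream and is what I would guard against when invoking the propositions. The first delicate point is that $\fNL=\fNL(L)\to0$, so before treating $\fNLest-\fNL$ as a pure third-chaos variable I must verify that the $O(\fNL^{2})$ term in \eqref{eq:varb}, the $O(\fNL)$ term in \eqref{eq:cqb}, and the $o_{\ell}(\fNL)$ remainder of Proposition \ref{prop:expect} are all asymptotically negligible against the leading variance and cumulant, so that the limit law is driven solely by the Gaussian component. The second, and in my view the main, obstacle is hidden in Lemma \ref{lemma:eta}: the sharp $L^{-4}$ size of the ratio---hence the $L^{-2}$ rate rather than the generic $L^{-3/2}$ one would expect from a standard central limit theorem over $N_{L}=O(L^{3})$ summands---comes from how the discrete sums $\Seta$ and $\Skappa$ approximate their integral counterparts $D_{r}(\cdot)$. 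After replacing the Wigner symbols by $\Delta^{-1/4}$ via Lemma \ref{LemmaFlashGordon} and passing to the integral of Lemma \ref{lemma:integral}, the leading error is the boundary discrepancy of a Riemann/lattice sum over the triangle-admissible region $A_{r}$, exactly of Gauss-circle-problem type; controlling this boundary term uniformly in $L$, and checking that the $\Delta^{-1/4}$ singularity along the degenerate-triangle boundary stays integrable (which is where the hypothesis $\alpha>4$ enters), is the crux on which the whole rate depends.
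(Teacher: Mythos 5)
Your proposal is correct and follows essentially the same route as the paper: invoke the fourth moment theorem for the third Wiener chaos, substitute the variance and fourth-cumulant asymptotics of Proposition \ref{lemmaest} so that the powers of $L$ cancel to $L^{-4}$ inside the square root, and evaluate the constant via $C_{\kappa}/C_{\eta^2}^2=6$ and $\tfrac{2\sqrt{2}}{3}\sqrt{6}=\tfrac{4}{\sqrt{3}}$. Your additional remarks on the negligibility of the $O(\fNL)$ remainders and on the lattice-to-integral discrepancy are sensible caveats about the upstream lemmas but do not alter the argument, which matches the paper's proof.
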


\begin{remark}[Asymptotic Rate and Geometric Boundary Phenomena]\label{rem:rates}
\begin{enumerate}
\item \textit{Configuration-Space Interpretation:} The algebraic rate $O(L^{-2})$ established in Theorem~\ref{thm:main} can be interpreted in terms of the number of available independent bispectrum configurations. From Remark~\ref{rem:Lambda}, the cardinality of the configuration space grows cubically, $N_L = O(L^3)$. Rewriting the total variation error bound in terms of sample size yields:
\[
d_{\mathrm{TV}}\left(\frac{\widehat f_{\mathrm{NL}}-f_{\mathrm{NL}}}{\sqrt{\Var{\widehat f_{\mathrm{NL}}}}}, Z \right) = O(N_L^{-2/3}).
\]
This convergence is strictly faster than the classical Berry--Esseen bound $O(N_L^{-1/2})$ typical of independent identically distributed sums. This acceleration highlights the regularizing behavior of the Wigner $3j$-coupling weights, which suppress the strong cross-dependencies among overlapping triads on the sphere.
\item \textit{Chaotic Universality and Critical Regimes:} The appearance of the $2/3$ exponent mirrors results found in non-local central limit theorems on the third Wiener chaos. For instance, in the study of non-linear functionals of stationary long-range dependent sequences \cite{npopt}, normalized Hermite forms $F_N = \frac{1}{\sqrt{N}}\sum H_3(X_k)$ under a critical covariance decay profile $\rho(j) = O (|j|^{-2/3})$ yield an identical $O(N^{-2/3})$ total variation decay rate. This suggests that the KSW algebraic filtering structure achieves an optimal balancing on the underlying Fock space.
\item \textit{Lattice Discrepancy Analogy:} A structural explanation for this convergence rate stems from spatial discrepancy theory. In the classical Gauss circle problem, the number of integer lattice points contained within a planar domain of scale $R$ deviates from its continuous area by a boundary error $\varepsilon(R)$. The relative approximation error scales as $O(R^{-4/3}) = O(N^{-2/3})$, where $N=O(R^2)$ matches the interior lattice count \cite{hardy79}. Because the proof of Theorem~\ref{thm:main} relies on approximating the discrete sum over the Wigner lattice $\Lambda_L$ with a continuous integral over the simplex $\mathcal{A}_r$, the $O(L^{-2}) = O(N_L^{-2/3})$ rate can be understood as an algebraic reflection of this boundary discrepancy phenomenon.
\end{enumerate}
\end{remark}

\section{Numerical illustration}\label{sec:numerics}

\begin{figure}[!htbp]
\centering
\includegraphics[width=.72\textwidth]{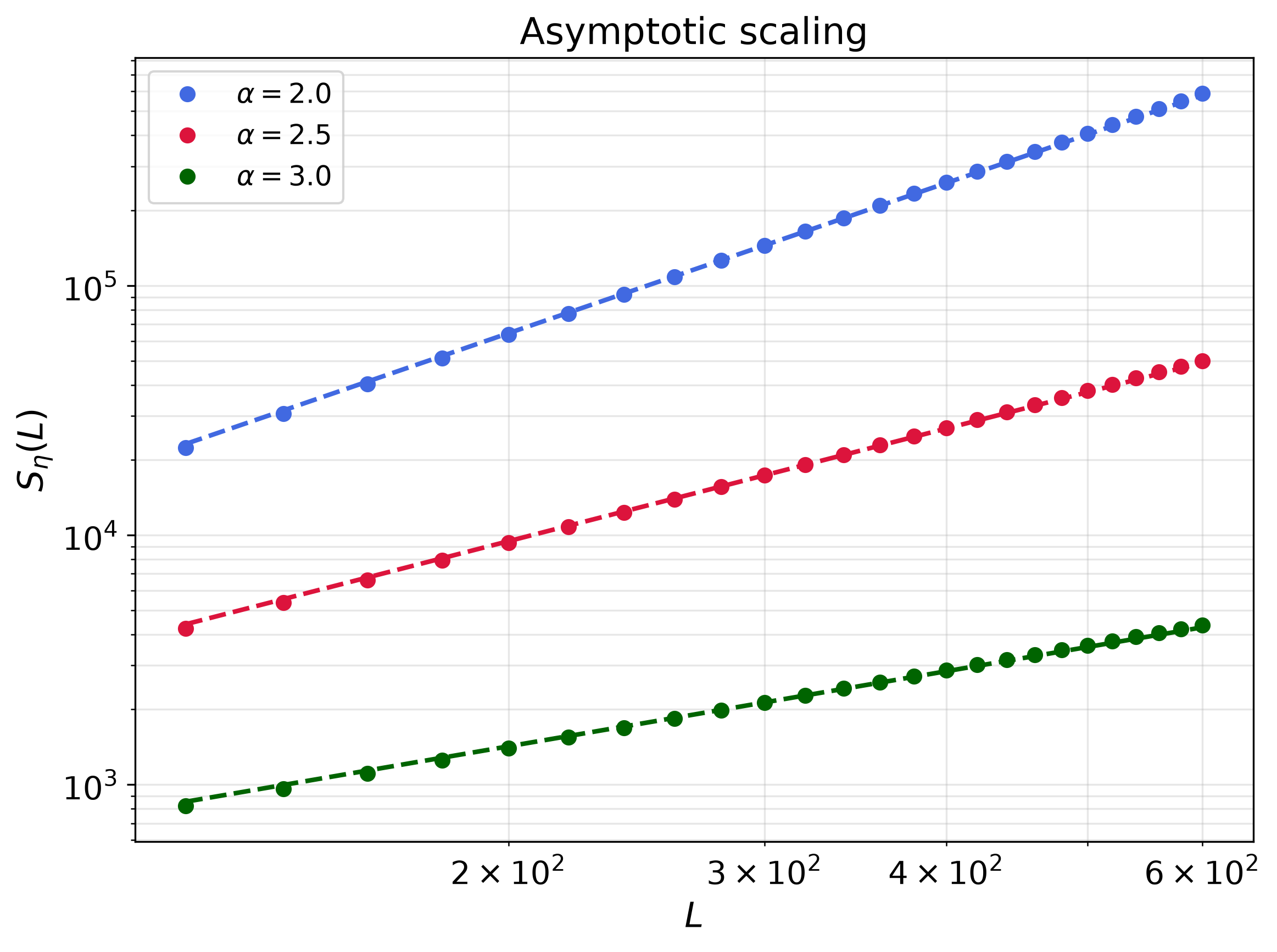}
\caption{
Log--log representation of the discrete sums $S_{\eta}(L)$ for
$\alpha=2$, $2.5$, and $3$ with $r=0.2$. The approximately linear behaviour confirms the predicted power-law scaling
$S_{\eta}(L)\propto L^{4-\alpha}$.
}
\label{fig:scaling}
\end{figure}

In this section, we present a systematic numerical investigation designed to evaluate the empirical validity of the high-frequency scaling laws and the quantitative normal approximations established in Lemma~\ref{lemma:eta} and Theorem~\ref{thm:main}. Given that our main results hold asymptotically, these simulations are intended solely for illustrative purposes.

We first examine the discrete weight sum over the triangular manifold $\Lambda_L$:
\[
S_{\eta}(L)=\sum_{\Lambda_L}
\frac{\ell_1^{1-\alpha}\ell_2^{1-\alpha}\ell_3^{1+\alpha}}
{\Delta(\ell_1,\ell_2,\ell_3)^{1/2}},
\]
with lower cutoff $r=0.2$, and compare its growth for increasing values of $L$ with the theoretical prediction
\[
S_{\eta}(L)=O(L^{4-\alpha}).
\]

Figure~\ref{fig:scaling} illustrates the empirical trajectories of $S_{\eta}(L)$ on a logarithmic scale for three representative spectral decay configurations: $\alpha=2.0$, $\alpha=2.5$, and $\alpha=3.0$. Across all examined regimes, the discrete configurations line up precisely along linear trajectories, validating the power-law exponent predicted by the continuous Riemannian approximation on the Wigner lattice.

\begin{table}[!htbp]
\centering
\caption{Estimated scaling exponents obtained by linear regression on the log--log plots.}
\label{tab:scaling}
\begin{tabular}{ccc}
\hline
$\alpha$ & Theory $(4-\alpha)$ & Estimated exponent\\
\hline
2.0 & 2.000 & 2.0297\\
2.5 & 1.500 & 1.5331\\
3.0 & 1.000 & 1.0365\\
\hline
\end{tabular}
\end{table}

To establish a formal quantitative benchmark for this alignment, Table~\ref{tab:scaling} reports the estimated scaling exponents obtained via ordinary least squares regression of $\log S_{\eta}(L)$ against $\log L$. The empirical exponents closely match the analytical values $4-\alpha$, with relative discrepancies remaining strictly bounded below $4\%$ across all testing configurations. Notably, the estimation accuracy increases for lower values of the spectral index $\alpha$, where the low-order power-law decay allows the system to enter the high-frequency regime at lower truncation thresholds.

Overall, the empirical scaling confirms that $S_\eta(L) = O(L^{4-\alpha})$, demonstrating that the continuous sieve approximation developed in Lemma~\ref{lemma:integral} accurately captures the geometric discrete-to-continuous boundary transitions even under moderate, non-asymptotic bandwidth horizons.

\begin{figure}[!htbp]
\centering
\includegraphics[width=\textwidth]{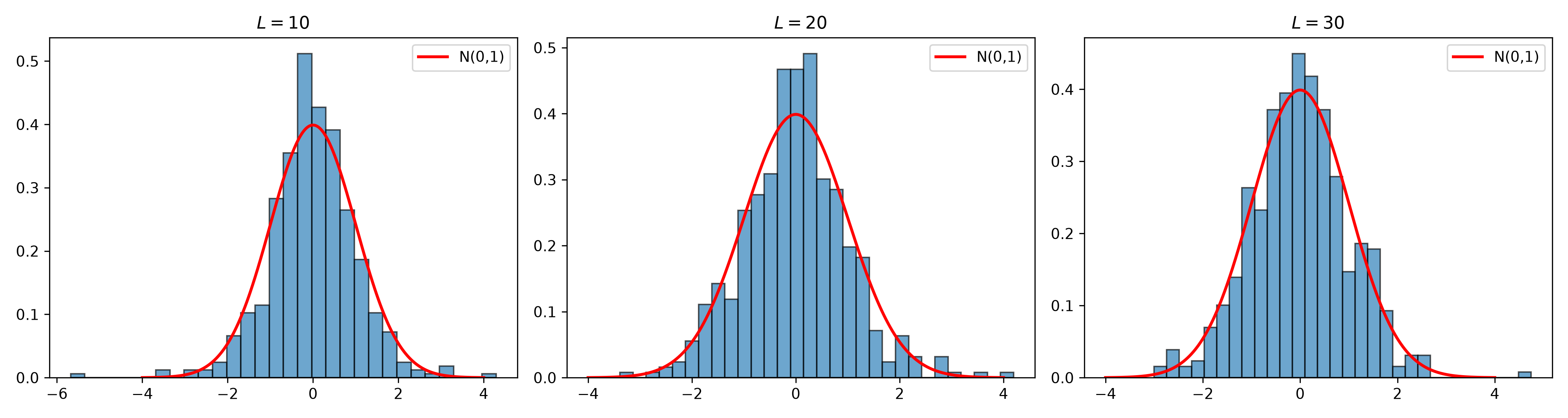}
\caption{Empirical distributions of the standardized estimator
$\widehat f_{\mathrm{NL}}$ under the Gaussian null hypothesis for
$L=10,20,30$, based on $1000$ Monte Carlo realizations. The red curve
corresponds to the standard normal density. The agreement with the
Gaussian approximation improves as the maximum multipole increases.}
\label{fig:histograms}
\end{figure}

To validate the quantitative central limit theorem established in Theorem~\ref{thm:main}, we performed a comprehensive Monte Carlo simulation study under the Gaussian null hypothesis $f_{\mathrm{NL}}=0$. For each discrete resolution level $L \in \{10, 20, 30\}$, we simulated $1\,000$ independent pseudo-random realizations of the studentized spherical harmonic coefficients $\{\widetilde a_{\ell m}\}$ satisfying the covariance structure \eqref{eq:powerdef}. For each iteration, we evaluated the OLS non-Gaussianity estimator $\widehat f_{\mathrm{NL}}$ defined in \eqref{eq:estimatordef}, and centered and standardized the resulting empirical sample to enforce mean zero and unit variance.

Figure~\ref{fig:histograms} displays the empirical probability density functions of the studentized estimator contrasted against the standard Gaussian reference density $\mathcal{N}(0,1)$. As the high-frequency envelope $L$ expands, the empirical profiles show a clear regularizing trend, rapidly matching the Gaussian target distribution as predicted by the Stein--Malliavin bounds.

\begin{table}[!htbp]
\centering
\caption{Summary statistics for the standardized estimator
$\widehat f_{\mathrm{NL}}$ under the Gaussian null hypothesis, based on
1000 Monte Carlo realizations.}
\label{tab:gaussian}
\begin{tabular}{lccc}
\toprule
$L$ & Skewness & Excess kurtosis & Shapiro--Wilk $p$-value\\
\midrule
10 & -0.2393 & 2.9345 & $6.82\times10^{-8}$\\
20 & \phantom{-}0.2033 & 0.9941 & $1.32\times10^{-3}$\\
30 & \phantom{-}0.1159 & 0.8327 & $1.72\times10^{-2}$\\
\bottomrule
\end{tabular}
\end{table}

This chaotic regularizing mechanism is statistically monitored in Table~\ref{tab:gaussian}, which tracks the evolution of the empirical sample skewness, sample excess kurtosis, and the corresponding $p$-values from the Shapiro--Wilk normality test. While the low-resolution regime ($L=10$) exhibits noticeable non-Gaussian signatures, characterized by a localized left-skewness and an inflated tail behavior, the high-order cumulants show a systematic decay as the resolution expands. By the time the bandwidth reaches $L=30$, both the skewness and excess kurtosis contract substantially, indicating a fast clearance of the residual third- and fourth-order cumulants from the underlying Wiener chaos projection. Although the Shapiro--Wilk test still detects the structural discreteness of the lattice boundary at these moderate resolution levels, the diagnostic trajectories move monotonically toward their target Gaussian limits. These controlled finite-sample experiments thus provide empirical evidence of the fast Central Limit Theorem trends established by our technical analysis.

\section{Proofs}\label{sec:proofs}
\subsection{Preliminary results}
The results in this subsection are classical, and the proofs are reproduced here in abbreviated form from \cite{m,MaPeCUP}.

\begin{proof}[Proof of Proposition \ref{prop:almtwo}]
	Using the identity $\alm =(-1)^m\cc{a}_{\ell,-m}$ (see for instance \cite{MaPeCUP}) together with Equation \eqref{eq:powerdef} we obtain
	\begin{align*}
		\mathbb{E}\left[\almtwo\right]& = \sum_{\ell _{1},\ell _{2}\geq 1}  \sum_{m_{1},m_{2}}   \mathbb{E}\left[a_{\ell_1,m_1;G} a_{\ell_2,m_2;G} \right] \sqrt{\frac{\left(\lcardbis{1}\right)\left(\lcardbis{2}\right)}{4\pi \left(\lcard\right)}}   \cg \cgnull\\
		& = \sum_{\ell _{1}\geq 1}  \sum_{m_{1}=-\ell_1}^{\ell_1}(-1)^{m_1}C_{\ell_1;G} \frac{\left(\lcardbis{1}\right)}{\sqrt{4\pi \left(\lcard\right)}}  \cgbis{\ell}{0}{\ell_1}{m_1}{\ell_1}{-m_1} \cgbis{\ell}{0}{\ell_1}{0}{\ell_1}{0},
	\end{align*}
	where the last equality follows $m=m_1-m_1=0$ in view of Equation \eqref{eq:m}. Applying \eqref{eq:prop1} and \eqref{eq:prop2} yields 
	\begin{align*}
		\mathbb{E}\left[\almtwo\right]
		&  = \delta_{\ell}^{0}\delta_{m}^{0}\sum_{\ell _{1}\geq 0} (-1)^{\ell_1} C_{\ell_1;G} \left[\frac{\left(\lcardbis{1}\right)^3}{4\pi } \right]^{\frac{1}{2}}  \cgbis{0}{0}{\ell_1}{0}{\ell_1}{0} =0,
	\end{align*}
	for $\ell\geq 1$. 
Hence, 
\begin{align*}
			\Ex{H_2(T_G(x))} &= 0. 
\end{align*}
We next compute the covariance matrix. For each pair $\ell,m$ and $\ell^\prime,m^\prime$, it holds that
	\begin{align}\notag
		\Ex{\almtwo \cc{a}_{\ell^\prime,m^\prime;2}} &=\sum_{\ell _{1},\ell _{2},\ell _{1}^\prime,\ell _{2}^\prime} \sum_{m_{1},m_{2},m_{1}^\prime,m_{2}^\prime} \sqrt{\frac{\left(\lcardbis{1}\right)\left(\lcardbis{2}\right)\left(2\ell_1^\prime+1\right)\left(2\ell_2^\prime+1\right)}{\left(\lcard\right)\left(2\ell^\prime+1\right)\left(4\pi\right)^2}}\\
		&\notag \times \cg \cgnull \cgbis{\ell^\prime}{m^\prime}{\ell_1^\prime}{m_1^\prime}{\ell_2^\prime}{m_2^\prime}\cgbis{\ell^\prime}{0}{\ell_1^\prime}{0}{\ell_2^\prime}{0}\\
		&\times  \Ex{a_{\ell_1,m_1;G}a_{\ell_2,m_2;G}\cc{a}_{\ell_1^\prime,m_{1}^\prime;G} \cc{a}_{\ell_{2}^\prime,m_{2}^\prime;G}}.  \label{eq:covalmtwo}
	\end{align}
	Since the harmonic coefficients are Gaussian, Wick's theorem yields
	\begin{align*}
		\Ex{a_{\ell_1,m_1;G}a_{\ell_2,m_2;G}\cc{a}_{\ell_1^\prime,m_{1}^\prime;G} \cc{a}_{\ell_{2}^\prime,m_{2}^\prime;G}} &=   \Ex{a_{\ell_1,m_1;G} \cc{a}_{\ell_1^\prime,m_{1}^\prime;G}} \Ex{a_{\ell_2,m_2;G} \cc{a}_{\ell_{2}^\prime,m_{2}^\prime;G}} \\
		&+  \Ex{a_{\ell_1,m_1;G} \cc{a}_{\ell_2^\prime,m_{2}^\prime;G}} \Ex{a_{\ell_2,m_2;G} \cc{a}_{\ell_{1}^\prime,m_{1}^\prime;G}} \\
		& + (-1)^{m_2+m_2^\prime} \Ex{a_{\ell_1,m_1;G} \cc{a}_{\ell_2,-m_{2};G}} \Ex{ \cc{a}_{\ell_{1}^\prime,m_{1}^\prime;G}a_{\ell_{2}^\prime,-m_{2}^\prime;G}} \\
		&= C_{\ell_1;G}C_{\ell_2;G}\delta_{\ell_1}^{\ell_1^\prime}\delta_{m_1}^{m_1^\prime}\delta_{\ell_2}^{\ell_2^\prime}\delta_{m_2}^{m_2^\prime} +C_{\ell_1;G}C_{\ell_2;G}\delta_{\ell_1}^{\ell_2^\prime}\delta_{m_1}^{m_2^\prime}\delta_{\ell_2}^{\ell_1^\prime}\delta_{m_2}^{m_1^\prime}\\
		&+ (-1)^{m_1+m_1^\prime} C_{\ell_1;G}C_{\ell_1^\prime;G}\delta_{\ell_1}^{\ell_2}\delta_{m_1}^{m_2}\delta_{\ell_1^\prime}^{\ell_2^\prime}\delta_{m_1^\prime}^{m_2^\prime}\\
		&=C_1+C_2+C_3.
	\end{align*}
We decompose \eqref{eq:covalmtwo} according to these three contributions.

The contribution of $C_1$ leads to
	\begin{align*}
		&\sum_{\ell _{1},\ell _{2},\ell _{1}^\prime,\ell _{2}^\prime} \sum_{m_{1},m_{2},m_{1}^\prime,m_{2}^\prime} \sqrt{\frac{\left(\lcardbis{1}\right)\left(\lcardbis{2}\right)\left(2\ell_1^\prime+1\right)\left(2\ell_2^\prime+1\right)}{\left(\lcard\right)\left(2\ell^\prime+1\right)\left(4\pi\right)^2}} \\
		& \times \cg \cgnull \cgbis{\ell^\prime}{m^\prime}{\ell_1^\prime}{m_1^\prime}{\ell_2^\prime}{m_2^\prime}\cgbis{\ell^\prime}{0}{\ell_1^\prime}{0}{\ell_2^\prime}{0}C_{\ell_1;G}C_{\ell_2;G}\delta_{\ell_1}^{\ell_1^\prime}\delta_{m_1}^{m_1^\prime}\delta_{\ell_2}^{\ell_2^\prime}\delta_{m_2}^{m_2^\prime}  \\
		&\qquad \qquad =\delta_{\ell}^{\ell^\prime}\delta_{m}^{m^\prime}\sum_{\ell _{1},\ell _{2}}  \frac{\left(\lcardbis{1}\right)\left(\lcardbis{2}\right)}{4\pi\left(\lcard\right)} \left(\cgnull\right)^2C_{\ell_1;G}C_{\ell_2;G},
	\end{align*}
	where in the last equality we used Equation \eqref{eq:prop3}.

	The contribution of $C_2$, combined with Equation \eqref{eq:prop0}, yields
	\begin{align*}
		&\sum_{\ell _{1},\ell _{2},\ell _{1}^\prime,\ell _{2}^\prime} \sum_{m_{1},m_{2},m_{1}^\prime,m_{2}^\prime} \sqrt{\frac{\left(\lcardbis{1}\right)\left(\lcardbis{2}\right)\left(2\ell_1^\prime+1\right)\left(2\ell_2^\prime+1\right)}{\left(\lcard\right)\left(2\ell^\prime+1\right)\left(4\pi\right)^2}} \\
		& \times \cg \cgnull \cgbis{\ell^\prime}{m^\prime}{\ell_1^\prime}{m_1^\prime}{\ell_2^\prime}{m_2^\prime}\cgbis{\ell^\prime}{0}{\ell_1^\prime}{0}{\ell_2^\prime}{0}C_{\ell_1;G}C_{\ell_2;G}\delta_{\ell_1}^{\ell_2^\prime}\delta_{m_1}^{m_2^\prime}\delta_{\ell_2}^{\ell_1^\prime}\delta_{m_2}^{m_1^\prime}  \\
		&\qquad \qquad =\delta_{\ell}^{\ell^\prime}\delta_{m}^{m^\prime}\sum_{\ell _{1},\ell _{2}}  \frac{\left(\lcardbis{1}\right)\left(\lcardbis{2}\right)}{4\pi\left(\lcard\right)} \left(\cgnull\right)^2C_{\ell_1;G}C_{\ell_2;G}.
	\end{align*}

Applying again \eqref{eq:prop1} and \eqref{eq:prop2}, the contribution of $C_3$ vanishes for $\ell>0$:
	\begin{align*}
		&\sum_{\ell _{1},\ell _{2},\ell _{1}^\prime,\ell _{2}^\prime} \sum_{m_{1},m_{2},m_{1}^\prime,m_{2}^\prime} \sqrt{\frac{\left(\lcardbis{1}\right)\left(\lcardbis{2}\right)\left(2\ell_1^\prime+1\right)\left(2\ell_2^\prime+1\right)}{\left(\lcard\right)\left(2\ell^\prime+1\right)\left(4\pi\right)^2}} \\
		& \times \cg \cgnull \cgbis{\ell^\prime}{m^\prime}{\ell_1^\prime}{m_1^\prime}{\ell_2^\prime}{m_2^\prime}\cgbis{\ell^\prime}{0}{\ell_1^\prime}{0}{\ell_2^\prime}{0}(-1)^{m_1+m_1^\prime} C_{\ell_1;G}C_{\ell_1^\prime;G}\delta_{\ell_1}^{\ell_2}\delta_{m_1}^{-m_2}\delta_{\ell_1^\prime}^{\ell_2^\prime}\delta_{m_1^\prime}^{-m_2^\prime} \\
		&= \delta_{m}^{0}\delta_{m^\prime}^{0}\sum_{\ell _{1}\ell _{1}^\prime}  (-1)^{\ell_1+\ell_1^\prime}\frac{\left(\left(2\ell_1+1\right)\left(2\ell_1^\prime+1\right)\right)^{\frac{3}{2}}}{4\pi \sqrt{\left(\lcard\right)\left(2\ell^\prime+1\right)}} 
		\cgbis{0}{0}{\ell_1}{0}{\ell_1}{0}\cgbis{0}{0}{\ell^\prime_1}{0}{\ell^\prime_1}{0}C_{\ell_1;G}C_{\ell_1^\prime;G}	= 0,
	\end{align*}
	for $\ell >0$. Summing the contributions of $C_1$, $C_2$ and $C_3$ yields
	\begin{align*}
		&\Ex{\almtwo \cc{a}_{\ell^\prime,m^\prime;2}}-\Ex{\almtwo}\Ex{ {a}_{\ell^\prime,m^\prime;2}}
		=C_{\ell;2} \delta^{\ell}_{\ell^{\prime}}\delta^{m}_{m^{\prime}}.
	\end{align*}
	For $x,y \in \sphere$, the harmonic expansion of the covariance function then follows from the addition formula for spherical harmonics, yielding
	\begin{equation*}
		\Cov{H_2\left((T_G(x)\right)}{H_2\left(T_G(y)\right)}=\sum_{\ell \geq 1} C_{\ell;2} \frac{2\ell+1}{4\pi} P_{\ell}\left(\langle x,y\rangle\right).
	\end{equation*}
\end{proof}

\begin{proof}[Proof of Proposition \ref{prop:complete}]
	First, using Proposition \ref{prop:almtwo} yields 
	\begin{align*}
		\Ex{\alm} & = \Ex{\almG+\fNL\almtwo}\\
		&=  \Ex{\almG}+ \fNL\Ex{\almtwo}\\
		&=0.
	\end{align*}
	Then, for any $x \in \sphere$, we have that 
	\begin{align*}
		\Ex{T(x)} & = 0 .
	\end{align*}
	Also, using Proposition~\ref{prop:almtwo} and recalling that $\Ex{a_{\ell_1,m_1;G}a_{\ell_2,m_2;G}a_{\ell_3,m_3;G}}=0$ for any possible choice of $\ell_1,\ell_2,\ell_3,m_1,m_2,m_3$ leads to:
	\begin{align*}
		\Ex{\alm \cc{a}_{\ell^\prime,m^\prime}} & = \Ex{\left(a_{\ell,m;G}+\fNL a_{\ell,m;2}\right)  \left(\cc{a}_{\ell^\prime,m^\prime;G}+\fNL\cc{a}_{\ell^\prime,m^\prime;2}\right)}\\
		&= \Ex{a_{\ell,m;G}\cc{a}_{\ell^\prime,m^\prime;G}}  + \fNL^2 \Ex{a_{\ell,m;2}\cc{a}_{\ell^\prime,m^\prime;2}},
	\end{align*}
	so that
	\begin{align*}	
		\Ex{\alm \cc{a}_{\ell^\prime,m^\prime}} -\Ex{\alm}\Ex{a_{\ell^\prime,m^\prime}} &=\left(C_{\ell;G}+\fNL^2 C_{\ell;2} \right) \delta_{\ell}^{\ell^\prime}\delta_{m}^{m^\prime}\\
		&=C_{\ell}\delta_{\ell}^{\ell^\prime}\delta_{m}^{m^\prime}.
	\end{align*}
	Indeed, since \(a_{\ell,m;2}\) is quadratic in the Gaussian coefficients, the mixed moments
\[
\mathbb E[a_{\ell,m;G}\overline{a_{\ell',m';2}}]
\quad\text{and}\quad
\mathbb E[a_{\ell,m;2}\overline{a_{\ell',m';G}}]
\]
are expectations of products of three Gaussian harmonic coefficients and therefore vanish by Wick's theorem.
	
Therefore, for $x,y\in\sphere$, it holds that
	\begin{align*}
		\Cov{T(x)}{T(y)} = \sum_{\ell \geq 1}  \frac{2\ell+1}{4\pi} C_{\ell}P_{\ell}\left(\langle x,y \rangle\right).
	\end{align*}
\end{proof}

\begin{proof}[Proof of Proposition \ref{prop:expect}]
	Following the argument of \cite{m}, substituting \eqref{eq:wT} into the definition of the bispectrum yields
	\begin{equation*}\label{eq:defprod}
		\begin{split}
&\qquad			\mathbb{E}\left[ \widetilde{a}_{\ell _{1},m_{1}}\widetilde{a}_{\ell_{2},m_{2}}\widetilde{a}_{\ell _{3},m_{3}}\right] \\
			&=\frac{1}{\sqrt{C_{\ell_1;G}C_{\ell_2;G}C_{\ell_3;G}}}\left[\Ex{ a_{\ell _{1},m_{1};G}a_{\ell _{2},m_{2};G}a_{\ell _{3},m_{3};G} } \right.\\
			&+ \fNL \left( \Ex{ a_{\ell _{1},m_{1};G}a_{\ell _{2},m_{2};G}a_{\ell _{3},m_{3};2} }\right.+\Ex{ a_{\ell _{1},m_{1};G}a_{\ell _{2},m_{2};2}a_{\ell _{3},m_{3};G} } \\
			& +\left.  \Ex{ a_{\ell _{1},m_{1};2}a_{\ell _{2},m_{2};G}a_{\ell _{3},m_{3};G} } \right) + o\left(\fNL\right). 
		\end{split}%
	\end{equation*}
Since the Gaussian harmonic coefficients have vanishing odd moments, it follows that
	\begin{equation*}
		\mathbb{E}\left[ a_{\ell _{1},m_{1};G}a_{\ell _{2},m_{2};G}a_{\ell _{3},m_{3};G}\right] =0.
	\end{equation*}%
	We now consider the three terms proportional to $\fNL$ and consider, for example, the following
	\begin{align*}
		\mathbb{E}\left[ a_{\ell _{1},m_{1};G}a_{\ell _{2},m_{2};G}a_{\ell _{3},m_{3};2}\right]&  =\sqrt{\frac{1}{\prod_{i=1}^3C_{\ell_i;G}}} \sum_{\lambda_1,\lambda_2} \sum_{\mu_1,\mu_2} \sqrt{\frac{(2\lambda_1+1)(2\lambda_2+1)}{4\pi(2\ell_3+1)}}\\
		& \times \Ex{ a_{\lambda_1,\mu_1;G}a_{\lambda_2,\mu_2;G} a_{\ell_1,m_1;G} a_{\ell_2,m_2;G}} \cgbis{\ell_3}{0}{\lambda_1}{0}{\lambda_2}{0}\cgbis{\ell_3}{m_3}{\lambda_1}{\mu_1}{\lambda_2}{\mu_2}.
	\end{align*}
	Since the harmonic coefficients are Gaussian, Wick's theorem gives us the expectation of the product of four harmonic coefficients. In particular, using the following formula (see \cite{m}):
	\begin{align*}
		&\Ex{ a_{\lambda_1,\mu_1;G}a_{\lambda_2,\mu_2;G} a_{\ell_1,m_1;G} a_{\ell_2,m_2;G}} = (-1)^{m_1+\mu_1}C_{\ell_1;G}C_{\lambda_1;G}\delta_{\ell_1}^{\ell_2}\delta_{\lambda_1}^{\lambda_2}\delta_{m_1}^{-m_2}\delta_{\mu_2}^{-m_2}\\
		&\qquad+(-1)^{-m_1-m_2} C_{\ell_1;G}C_{\ell_2;G} \left(\delta_{\ell_1}^{\lambda_1}\delta_{\ell_2}^{\lambda_2}\delta_{m_1}^{-\mu_1}\delta_{m_2}^{-\mu_2}+\delta_{\ell_1}^{\lambda_2}\delta_{\ell_2}^{\lambda_1}\delta_{m_1}^{-\mu_2}\delta_{m_2}^{-\mu_1}\right),
	\end{align*}
it follows that 
	\begin{align*}
		\mathbb{E}\left[ a_{\ell _{1},m_{1};G}a_{\ell _{2},m_{2};G}a_{\ell _{3},m_{3};2}\right]&  =\frac{1}{\sqrt{C_{\ell_3;G}}} \sum_{\lambda_1} \sum_{\mu_1} \frac{(2\lambda_1+1)}{\sqrt{4\pi(2\ell_3+1)}}\\
		& C_{\lambda_1;G} (-1)^{-m_1-\mu_1}
		\cgbis{\ell_3}{0}{\lambda_1}{0}{\lambda_1}{0}\cgbis{\ell_3}{m_3}{\lambda_1}{\mu_1}{\lambda_1}{-\mu_1}		\\
		&+2\sqrt{\frac{C_{\ell_1;G}C_{\ell_2;G}}{C_{\ell_3;G}}}  \sqrt{\frac{(2\ell_1+1)(2\ell_2+1)}{4\pi(2\ell_3+1)}}   \\
		& (-1)^{-m_1-m_2}
		\cgbis{\ell_3}{0}{\ell_1}{0}{\ell_2}{0}\cgbis{\ell_3}{m_3}{\ell_1}{-m_1}{\ell_2}{-m_2}.
	\end{align*}
	Consider the first addend. Since the Clebsch–Gordan coefficient vanishes unless $m_3=\mu_1-\mu_1=0$, this contribution is proportional to$\delta_{m_3}^0$. Hence, using Equations \eqref{eq:prop1} and \eqref{eq:prop2}, we have that 
	\begin{equation*}
		\begin{split}
			&	\sum_{\mu_1} (-1)^{\lambda_1-\mu_1} \cgbis{\ell_3}{0}{\lambda_1}{\mu_1}{\lambda_1}{-\mu_1}	= \sqrt{2\lambda_1+1} \delta_{\ell_3}^{0}\\
			& \cgbis{0}{0}{\lambda_1}{0}{\lambda_1}{0}=	\frac{(-1)^{\lambda_1} }{\sqrt{2\lambda_1+1} },
		\end{split}
	\end{equation*}
	so that 
	\begin{equation*}
		\begin{split}
			&\frac{1}{\sqrt{C_{\ell_3;G}}} \sum_{\lambda_1} \sum_{\mu_1} \frac{(2\lambda_1+1)}{\sqrt{4\pi(2\ell_3+1)}} C_{\lambda_1;G} (-1)^{-m_1-\mu_1}
			\cgbis{\ell_3}{0}{\lambda_1}{0}{\lambda_1}{0}\cgbis{\ell_3}{m_3}{\lambda_1}{\mu_1}{\lambda_1}{-\mu_1}	\\
			&=\frac{1}{\sqrt{C_{0;G}}} \delta_{\ell_3}^{0}\delta_{m_3}^{0}\sum_{\lambda_1} \frac{2\lambda_1+1}{\sqrt{4\pi}} C_{\lambda_1;G} = 0,
		\end{split}
	\end{equation*}
Since throughout the proposition we consider
\(\ell_3\ge1\), the factor
\(\delta_{\ell_3}^0\)
forces this contribution to vanish.	Using Equation \eqref{eq:wignercj}, the second addend becomes 
	\begin{equation*}
		\begin{split}
			2\sqrt{\frac{ C_{\ell_1;G}C_{\ell_2;G}}{C_{\ell_3;G}}}  &\sqrt{\frac{(2\ell_1+1)(2\ell_2+1)}{4\pi(2\ell_3+1)}  }  (-1)^{-m_1-m_2}
			\cgbis{\ell_3}{0}{\ell_1}{0}{\ell_2}{0}\cgbis{\ell_3}{m_3}{\ell_1}{-m_1}{\ell_2}{-m_2}\\ &= \gamma_{\ell_{1},\ell_{2}}\hal \wigner{\ell_1}{\ell_2}{\ell_3}{m_1}{m_2}{m_3}\wigner{\ell_1}{\ell_2}{\ell_3}{0}{0}{0}.
		\end{split}
	\end{equation*}	
	Finally, we assume $\ell_i \geq 1$ , $i=1,2,3$. The computation for the remaining two first-order terms is identical after permuting the indices (see also \cite{m}).

	It follows that
	\begin{align*}
		&\Ex {a_{\ell _{1},m_{1};G} a_{\ell _{2},m_{2};G}a_{\ell
				_{3},m_{3};2}}+\Ex {a_{\ell _{1},m_{1};G} a_{\ell _{2},m_{2};2}a_{\ell
				_{3},m_{3};G}}+\Ex {a_{\ell _{1},m_{1};2} a_{\ell _{2},m_{2};G}a_{\ell
				_{3},m_{3};G}}\\
		& \qquad \qquad \qquad\qquad \qquad\qquad = 2 \hal\Gammal \begin{pmatrix}
			\ell _{1} & \ell _{2} & \ell _{3} \\ 
			m_{1} & m_{2} & m_{3}%
		\end{pmatrix}%
		\begin{pmatrix}
			\ell _{1} & \ell _{2} & \ell _{3} \\ 
			0 & 0 & 0%
		\end{pmatrix}.
	\end{align*}
Collecting the three first-order contributions yields
\[
\mathbb E[\widetilde a_{\ell_1m_1}
\widetilde a_{\ell_2m_2}
\widetilde a_{\ell_3m_3}]
=
f_{\rm NL}\,\eta_{\ell_1\ell_2\ell_3}
+o_\ell(f_{\rm NL}).
\]
\end{proof}

\begin{proof}[Proof of Proposition \ref{prop:Best}]
	We first compute the expectation of the estimator \eqref{eq:Bispest} to prove \eqref{eq:exb}. Indeed, using \eqref{eq:expwalm} yields 
	\begin{equation*}
		\begin{split}
			\Ex{ \Bispest} & =\sum_{m_{1}=-\el{1}}^{\el{1}}\sum_{m_{2}=-\el{2}}^{\el{2}}\sum_{m_{3}=-\el{3}}^{\el{3}} \wigner{\ell _{1}}{\ell _{2}} {\ell _{3}}{m_{1}} {m_{2}} {m_{3}}  ^2
			\Bisp = \Bisp,
		\end{split}
	\end{equation*}
	where the last equality follows Equation \eqref{eq:prop3}.  

	In order to prove \eqref{eq:varb}, first observe that $\Ex{\Bispest}^2= \fNL^2$. Then, note that  
	\begin{align*}
		\Ex{\widetilde{a}_{\ell _{1},m_{1}}\widetilde{a}_{\ell _{2},m_{2}}%
			\widetilde{a}_{\ell _{3},m_{3}}\cc{\widetilde{a}}_{\ell
				_{1},m_{1}^{\prime }}\cc{\widetilde{a}}_{\ell _{2},m_{2}^{\prime }}%
			\cc{\widetilde{a}}_{\ell _{3},m_{3}^{\prime }}} &\\ 
		=\delta _{m_{1}}^{m_{1}^{\prime }}\delta _{m_{2}}^{m_{2}^{\prime }}\delta
		_{m_{3}}^{m_{3}^{\prime }}  \Ex{ \left\vert \widetilde{a}_{\ell
				_{1},m_{1}}\right\vert ^{2}} &  \Ex{ \left\vert \widetilde{a}%
			_{\ell _{2},m_{2}}\right\vert ^{2}} \Ex{ \left\vert 
			\widetilde{a}_{\ell _{3},m_{3}}\right\vert ^{2}}\\
		=  \delta _{m_{1}}^{m_{1}^{\prime }}\delta _{m_{2}}^{m_{2}^{\prime }}\delta
		_{m_{3}}^{m_{3}^{\prime }}+O\left( \fNL^{2}\right).
	\end{align*}%
	Under the constraint $\ell_1<\ell_2<\ell_3$, every Wick contraction pairing different multipoles vanishes because of the Kronecker deltas in the covariance. 
	Therefore, Wick’s theorem gives	
	\begin{align*}
		\Ex{\Bispest ^2}=&  \sum_{\mm{1}, \mm{2},\mm{3}}\sum_{\mm{1}^{\prime },\mm{2}^{\prime
			},\mm{3}^{\prime}} \wigner{\el{1}} {\el{2}} {\el{3}} {\mm{1}} {\mm{2}} {\mm{3}}\wigner{\el{1}} {\el{2}} {\el{3}} {\mm{1}^\prime} {\mm{2}^\prime} {\mm{3}^\prime}\\
		&\Ex{ \widetilde{a}_{\ell _{1},m_{1}}\widetilde{a}_{\ell
				_{2},m_{2}}\widetilde{a}_{\ell _{3},m_{3}}\cc{\widetilde{a}}_{\ell
				_{1},m_{1}^{\prime }}\cc{\widetilde{a}}_{\ell _{2},m_{2}^{\prime }}
			\cc{\widetilde{a}}_{\ell _{3},m_{3}^{\prime }}}\\
		=&  \sum_{\mm{1}, \mm{2},\mm{3}}\wigner{\el{1}} {\el{2}} {\el{3}} {\mm{1}} {\mm{2}} {\mm{3}}^2 + O\left( \fNL^2\right)\\
		=& 1 +  O\left( \fNL^2\right).
	\end{align*}
As far as Equation \eqref{eq:cqb} is concerned, we follow again the arguments of \cite{m}. Using the definition of fourth cumulant yields 
\begin{equation*}
	\begin{split}
		&\cumq{\Bispest} \\ & = \sum_{m_1,m_2,m_3}\sum_{m_4,m_5,m_6}\sum_{m_7,m_8,m_9}\sum_{m_{10},m_{11},m_{12}}\wigner{\el{1}} {\el{2}} {\el{3}} {\mm{1}} {\mm{2}} {\mm{3}}\\ & \quad \wigner{\el{1}} {\el{2}} {\el{3}} {\mm{4}} {\mm{5}} {\mm{6}}\wigner{\el{1}} {\el{2}} {\el{3}} {\mm{7}} {\mm{8}} {\mm{9}}\wigner{\el{1}} {\el{2}} {\el{3}} {\mm{10}} {\mm{11}} {\mm{12}}\\ &\cumq{\tilde a_{\el{1},\mm{1}}\tilde a_{\el{2},\mm{2}}\tilde a_{\el{3},\mm{3}} \tilde a_{\el{1},\mm{4}}\tilde a_{\el{2},\mm{5}}\tilde a_{\el{3},\mm{6}} \tilde a_{\el{1},\mm{7}}\tilde a_{\el{2},\mm{8}}\tilde a_{\el{3},\mm{9}}\tilde a_{\el{1},\mm{10}}\tilde a_{\el{2},\mm{11}} \tilde a_{\el{3},\mm{12}} }.
	\end{split}
\end{equation*}
By the diagram formula for fourth-order cumulants (see \cite{m,MaPeCUP}), only four connected pairing configurations contribute:
\begin{equation*}
	\begin{split}
		\cumq{\Bispest}  & = 6 \sum_{m_1,m_2,m_3}\sum_{m_4,m_5,m_6}\wigner{\el{1}} {\el{2}} {\el{3}} {\mm{1}} {\mm{2}} {\mm{3}} \wigner{\el{1}} {\el{2}} {\el{3}} {\mm{1}} {\mm{4}} {\mm{5}}\\
		&	\quad \wigner{\el{1}} {\el{2}} {\el{3}} {\mm{6}} {\mm{2}} {\mm{5}}\wigner{\el{1}} {\el{2}} {\el{3}} {\mm{6}} {\mm{4}} {\mm{3}}\\
		&+6 \sum_{m_1,m_2,m_3}\sum_{m_4,m_5,m_6}\wigner{\el{1}} {\el{2}} {\el{3}} {\mm{1}} {\mm{2}} {\mm{3}} \wigner{\el{1}} {\el{2}} {\el{3}} {\mm{1}} {\mm{4}} {\mm{3}}\\
		&	\quad \wigner{\el{1}} {\el{2}} {\el{3}} {\mm{5}} {\mm{4}} {\mm{6}}\wigner{\el{1}} {\el{2}} {\el{3}} {\mm{5}} {\mm{2}} {\mm{6}}\\
		&+6 \sum_{m_1,m_2,m_3}\sum_{m_4,m_5,m_6}\wigner{\el{1}} {\el{2}} {\el{3}} {\mm{1}} {\mm{2}} {\mm{3}} \wigner{\el{1}} {\el{2}} {\el{3}} {\mm{1}} {\mm{2}} {\mm{4}}\\
		&	\quad \wigner{\el{1}} {\el{2}} {\el{3}} {\mm{5}} {\mm{6}} {\mm{4}}\wigner{\el{1}} {\el{2}} {\el{3}} {\mm{5}} {\mm{6}} {\mm{3}}\\
		&+6 \sum_{m_1,m_2,m_3}\sum_{m_4,m_5,m_6}\wigner{\el{1}} {\el{2}} {\el{3}} {\mm{1}} {\mm{2}} {\mm{3}} \wigner{\el{1}} {\el{2}} {\el{3}} {\mm{4}} {\mm{2}} {\mm{3}}\\
		&	\quad \wigner{\el{1}} {\el{2}} {\el{3}} {\mm{4}} {\mm{5}} {\mm{6}}\wigner{\el{1}} {\el{2}} {\el{3}} {\mm{1}} {\mm{5}} {\mm{6}}. 
	\end{split}
\end{equation*}
The first sum is equal to the Wigner $6j$-symbol (see \cite[p. 417]{vmk}). The remaining three sums are evaluated using the orthogonality identity \eqref{eq:wignerup}, yielding $(2\ell_i+1)^{-1}$, $i=1,2,3$; see \cite[Theorem 9.7]{MaPeCUP}. This completes the proof of \eqref{eq:cqb}.
\end{proof}

\subsection{Main results}
This section gathers the proofs of our main results, along with the key auxiliary lemmas and technical estimates that are essential for establishing them.
\begin{proof}[Proof of Lemma \ref{lemma:integral}]
	We begin by applying the following change of variables to the integrand in \eqref{eq:genint}:
	\begin{equation*}
		x_1=ut; \quad x_2=v t, \quad x_3=t,
	\end{equation*}
	where $t$ represents the overall scale and $(u,v)$ describe the normalized shape of the triangle. Since $\Delta$ is homogeneous of degree $4$, we have
	\begin{equation*}
		\Delta(x_1,x_2,x_3)= t^4	\Delta(u,v,1) = t^4 Q(u,v).
	\end{equation*}
	This factorization is helpful in integration, as it separates scale from shape. \\
	Under this change of variables, the integrand must be multiplied by the Jacobian $J$ determinant associated with the transformation, which accounts for the change in volume element. The Jacobian $J$ is given by
	\begin{equation*}
		J=\begin{pmatrix}
			t & 0 & u\\
			0 & t & v\\
			0 & 0 & 1 
		\end{pmatrix},
	\end{equation*}
	and $\text{det}J = t^2$.\\
With regard to the integration domain, the scaling variable satisfies
\[
t\in\left(\frac{r}{u},1\right).
\] 
Indeed, the constraints $x_3=t<1$ and $x_1=ut>r$ imply $t>r/u$. We now determine the corresponding domain for the variables $(u,v)$. Since $x_1,x_2<x_3$, it follows that
\[
0<u<1,\qquad 0<v<1.
\]
Moreover, from $x_1=ut>r$ and $t<1$, we deduce that $u>r$. Finally, because $x_2>x_1$, we have
\[
x_2=vt>x_1=ut,
\]
and hence $v>u$.
The triangle inequalities become
\[
|u-v|<1<u+v.
\]
Since $v>u$, the first inequality reduces to $v-u<1$, which is automatically satisfied because $0<u<v<1$. The second inequality is equivalent to
\[
v>1-u.
\]
Combining these conditions yields
\[
\max(u,1-u)<v<1.
\]
Since $\max(u,1-u)=1-u$ for $u<1/2$ and $\max(u,1-u)=u$ for $u>1/2$, the integration domain is precisely $\Omega_r=\Omega_{1,r}\cup\Omega_{2,r}$, where $\Omega_{1,r}$ and $\Omega_{2,r}$ are defined in \eqref{eq:omegadomain}.
Hence, by defining $\lambda=a+b+c-4d+3$, Equation \eqref{eq:genint} becomes
	\begin{equation*}
		\begin{split}
			D_r(a,b,c,d) & = \int_{\Omega_r} \frac{u^av^b}{Q^{d}(u,v)} \left(\int_{\frac{r}{u}}^{1} t^{\lambda-1} \diff t \right) \diff u \diff v
		\end{split}.
	\end{equation*}
Evaluating the inner integral with respect to the scaling variable t yields \eqref{eq:Gfinale}. The resulting integral depends only on the shape variables $(u,v)$ and is finite over $\Omega_r$.
\end{proof}

\begin{proof}[Proof of Lemma \ref{LemmaFlashGordon}]
	We use the explicit formula for the Wigner $3j$-symbol with $m_1=m_2=m_3=0$ (see \cite[Eq. 3.61]{MaPeCUP}):
	\begin{equation}\label{eq:wigner1}
		\begin{split}
			\wigner{\ell _{1}}{\ell _{2}}{\ell _{3}}{0}{0}{0}
			&=\left( -1\right) ^{\frac{\ell _{1}+\ell _{2}+\ell _{3}}{2}}\frac{\left( \frac{\ell
					_{1}+\ell _{2}+\ell _{3}}{2}\right) !}{\left( \frac{\ell _{1}+\ell _{2}-\ell
					_{3}}{2}\right) !\left( \frac{\ell _{1}-\ell _{2}+\ell _{3}}{2}\right)
				!\left( \frac{-\ell _{1}+\ell _{2}+\ell _{3}}{2}\right) !} \\
			&\times \sqrt{\frac{\left( \ell _{1}+\ell _{2}-\ell _{3}\right) !\left(
					\ell _{1}-\ell _{2}+\ell _{3}\right) !\left( -\ell _{1}+\ell _{2}+\ell
					_{3}\right) !}{\left( \ell _{1}+\ell _{2}+\ell _{3}+1\right) !}}\text{ ;}
		\end{split}
	\end{equation}%
	Applying Stirling’s approximation to each factorial in \eqref{eq:wigner1} gives 
	\begin{equation*}\label{eq:wigner2}
		\begin{split}
			& \wigner{\ell _{1}}{\ell _{2}}{\ell _{3}}{0}{0}{0}
			\simeq \left( -1\right) ^{\frac{\ell _{1}+\ell _{2}+\ell _{3}}{2}}\sqrt{\frac{2e}{\pi}} \left(\frac{\ell_1+\ell_2+\ell_3}{\ell_1+\ell_2+\ell_3+1}\right)^{\frac{\ell_1+\ell_2+\ell_3-1}{2}}\\
			& \qquad \times \left( \left( \ell _{1}+\ell
			_{2}+\ell _{3}+1\right) \left( \ell _{1}+\ell _{2}-\ell _{3}\right) \left(
			\ell _{1}-\ell _{2}+\ell _{3}\right) \left( -\ell _{1}+\ell _{2}+\ell
			_{3}\right) \right) ^{-\frac{1}{4}}  .
		\end{split}
	\end{equation*}
	For large $\ell_1,\ell_2,\ell_3$, the last factor in the previous formula is asymptotically equivalent to  $\Delta\left( \ell _{1},\ell _{2},\ell _{3}\right) ^{-\frac{1}{4}}$.
	Let us now define $\rho=\ell_1+\ell_2+\ell_3$, so that 
	\begin{equation*}
		\begin{split}
			\left(\frac{\ell_1+\ell_2+\ell_3}{\ell_1+\ell_2+\ell_3+1}\right)^{\frac{\ell_1+\ell_2+\ell_3-1}{2}} &= 	\left(\frac{\rho}{\rho+1}\right)^{\frac{\rho-1}{2}}\\
			&= 	\left(1-\frac{1}{\rho+1}\right)^{\frac{\rho-1}{2}}
		\end{split} .
	\end{equation*}
	Note that 
	\begin{equation*}
		\begin{split}
			\log \left(1-\frac{1}{\rho+1}\right)^{\frac{\rho-1}{2}} & = \frac{\rho-1}{2}	\log \left(1-\frac{1}{\rho+1}\right)\\ 
			&\underset{\rho \rightarrow \infty}
			{\approx	}\frac{\rho-1}{2}\frac{-1}{\rho+1} \\
			& = -\frac{1}{2}+O\left(\frac{1}{\rho}\right).	
		\end{split}
	\end{equation*}
	Thus 
	\begin{equation*}
		\left(\frac{\ell_1+\ell_2+\ell_3}{\ell_1+\ell_2+\ell_3+1}\right)^{\frac{\ell_1+\ell_2+\ell_3-1}{2}} \approx e^{-\frac{1}{2}},
	\end{equation*}
	and we obtain therefore \eqref{FlashGordon1}. 
\end{proof}

\begin{proof}[Proof of Lemma \ref{lemma:eta}]
	under Condition \ref{eq:powercond},as $L \rightarrow \infty$, we have the asymptotic approximation
	\begin{align*}
		\etal^2 = \frac{(2\ell_1+1)(2\ell_2+1)(2\ell_3+1)}{(4\pi)^{\frac{3}{2}}}
		\begin{pmatrix}
			\ell _{1} & \ell _{2} & \ell _{3} \\ 
			0 & 0 & 0%
		\end{pmatrix}%
		^{2}\Gamma _{\ell _{1}\ell _{2}\ell _{3}}^{2} 
	\end{align*}
	with 
	\begin{align*}
		\etal^{2,\text{lim}} =C_{\eta^2} \frac{\ell _{1}^{1-\alpha}\ell_{2}^{1-\alpha}\ell _{3}^{1+\alpha}}  {\Delta\left(\ell_1,\ell_2, \ell_3\right)^{\frac{1}{2}}} .
	\end{align*}%
	Indeed, for $\ell_1 <\ell_2<\ell_3$, under Condition~\ref{eq:powercond}, $\Gamma _{\ell _{1}\ell_{2}\ell _{3}}^{2}$ is dominated by the term $C_{\ell_1}C_{\ell_2}/C_{\ell_3}$, which satisfies the asymptotic relation 
	\begin{equation*}
		A \left(\frac{\ell_1\ell_2}{\ell_3}\right)^{-\alpha}. 
	\end{equation*}
	Moreover, by Lemma \ref{LemmaFlashGordon}, the squared Wigner $3j$-symbol $\begin{pmatrix}
		\ell _{1} & \ell _{2} & \ell _{3} \\ 
		0 & 0 & 0%
	\end{pmatrix}%
	^{2}$ is asymptotically equivalent to 
	\begin{equation*}
		\frac{2}{\pi} \Delta\left(\ell_1,\ell_2, \ell_3\right)^{-\frac{1}{2}}.
	\end{equation*}
	Finally, $\hal^2$ is approximated by 
	\begin{equation*}
		4\left(\frac{2}{ \pi}\right)^3\ell_1\ell_2\ell_3.
	\end{equation*}
	Therefore,
	\begin{align*}
		\Seta^{\text{lim}}= C_{\eta^2} \summinus \etal^{2,\text{lim}}.
	\end{align*}
An analogous argument yields
	\begin{equation*}
		\kappal =  \frac{(2\ell_1+1)(2\ell_2+1)^2(2\ell_3+1)^2}{(4\pi)^{3}}
		\begin{pmatrix}
			\ell _{1} & \ell _{2} & \ell _{3} \\ 
			0 & 0 & 0%
		\end{pmatrix}%
		^{4}\Gamma _{\ell _{1}\ell _{2}\ell _{3}}^{4}, 
	\end{equation*}
	with
	\begin{align*}
		\kappal^{\text{lim}} =C_{\kappa} \frac{\ell _{1}^{1-2\alpha}\ell_{2}^{2\left(1-\alpha\right)}\ell _{3}^{2\left(1+\alpha\right)}}  {\Delta\left(\ell_1,\ell_2, \ell_3\right)} ,
	\end{align*}
	such that  $\Skappa$ can be approximated with 
	\begin{align*}
		\Skappa^{\text{lim}}= C_{\kappa} \summinus \kappal^{\text{lim}}.
	\end{align*}
	In both the cases, as $L$ grows to infinity, since the summands are evaluations of smooth functions on the lattice $\Lambda_L$, the sums are Riemann sums associated with the corresponding integrals. Hence, $\Seta^{\text{lim}}$ and $\Skappa^{\text{lim}}$ are well-approximated by the Riemann integrals
	\begin{align*}
		& \int_{A_r}   \frac{ \left(Lx_1 \right)^{1-\alpha}  \left(Lx_2 \right)^{1-\alpha} \left(Lx_3 \right)^{1+\alpha}} {\Delta\left(Lx_1,Lx_2, Lx_3\right)^{\frac{1}{2}}} L^3 \diff x_1 \diff x_2 \diff x_3 = 	L^{4-\alpha}I_{\eta^2;r}\left(\alpha\right),\\
		& \int_{A_r}   \frac{\left(Lx_1 \right)^{1-2\alpha}\left(Lx_2 \right)^{2\left(1-\alpha\right)}\left(Lx_3 \right)^{2\left(1+\alpha\right)}}  {\Delta\left(Lx_1,Lx_2, Lx_3\right)} L^3 \diff x_1 \diff x_2 \diff x_3 = 	L^{4-2\alpha}I_{\kappa,r}\left(\alpha\right)
	\end{align*}
	where $A_r$ is given by \eqref{eq:intdomain}, since
\[
\Delta(Lx_1,Lx_2,Lx_3)
=
L^4\Delta(x_1,x_2,x_3),
\]
the integrands scale as
\[
L^{1-\alpha+1-\alpha+1+\alpha-2+3}
=
L^{4-\alpha},
\]
and similarly for the second integral. 
	Applying Lemma \ref{lemma:integral} with $a=b=1-\alpha$, $c=1+\alpha$ and $d=1/2$,  we obtain $\lambda = 4-\alpha$ and 
	\begin{equation*}
		I_{\eta^2,r} (\alpha) = \frac{1}{4-\alpha} \int_{\Omega_r} \frac{(uv)^{1-\alpha}}{Q(u,v)^{\frac{1}{2}}} \left(1-\left(\frac{r}{u}\right)^{4-\alpha} \right) \diff u \diff v.
	\end{equation*}
	Analogously, for $a=1-2\alpha$, $b=2(1-\alpha)$, $c=2(1+\alpha)$ and $d=1$,  we obtain $\lambda = 4-2\alpha$ and 
	\begin{equation*}
		I_{\kappa,r} (\alpha) = \frac{1}{2(2-\alpha)} \int_{\Omega_r} \frac{u^{1-2\alpha}v^{2(1-\alpha)}}{Q(u,v)} \left(1-\left(\frac{r}{u}\right)^{2(2-\alpha)} \right) \diff u \diff v.
	\end{equation*}
\end{proof}

\begin{proof}[Proof of Proposition \ref{lemmaest}]
	Combining Proposition \ref{prop:Best} with \eqref{eq:Bisp} and \eqref{eq:vec} yields
	\begin{equation*}
		\begin{split}
			\Ex{\fNLest} & =  \Ex{\frac{H^T Y}{H^T H}}\\
			&=\frac{H^T \Ex{Y}}{H^T H}\\
			&=\fNL \frac{H^T H}{H^T H} + o\left(\fNL\right)\\
			&=\fNL + o\left(\fNL\right),
		\end{split}
	\end{equation*}
	Proposition~\ref{prop:Best} implies that each component of Y has asymptotic variance one and distinct components are asymptotically uncorrelated. Hence,
	\[
	\Var{Y}=\bm{1}_{N_L},
	\]
where $\bm{1}_{N_L}$ is the identity matrix of dimension $N_L$. Hence, we have that	
	\begin{equation*}
		\begin{split}
			\Var{ \fNLest } &= \Var{\frac{H^TY}{H^TH}}\\
			&=  \frac{1}{\left(H^TH\right)^2} H^T \Var{Y} H \\ 
			&=  \frac{1}{\left(H^TH\right)^2} H^T\bm{1}_{N_L} H \\
			& =  \frac{1}{H^TH}\\
			& =  \frac{1}{\summinus \etal ^2}\\
			&= \Seta^{-1}.
		\end{split}
	\end{equation*}
	Using Lemma \ref{lemma:eta} yields
	\begin{equation*}
		\begin{split}
			\Var{ \fNLest } &= \frac{1}{C_{\eta^2}I_{\eta^2;r}\left(\alpha\right)} L^{\alpha-4}+ o\left(L^{\alpha-4}\right)\\
			& = \sigma_{\fNL}^2 L^{\alpha-4}+ o\left(L^{\alpha-4}\right).
		\end{split}
	\end{equation*}
Then, since the components of $Y$ are uncorrelated, all mixed fourth-order cumulants vanish, and multilinearity yields
	\begin{equation*}
		\begin{split}
			\cumq{\fNLest} & = \cumq{\frac{H^TY}{H^TH}}\\
			&= \frac{1}{\left(H^TH\right)^4}	\cumq{H^TY} \\
			& =  \frac{1}{\left(\summinus \etal ^2\right)^4}  \summinus \etal ^4 \cumq{\Bispest}\\
			& \leq  \frac{1}{\left(\summinus \etal ^2\right)^4}  \summinus \kappa_{\ell_1,\ell_2,\ell_3} 
			\frac{12}{2\ell_1+1}\\
			&= \frac{\Skappa}{\Seta^4}
		\end{split} 
	\end{equation*}
	Applying Lemma \ref{lemma:eta} completes the proof.
\end{proof}

\begin{proof}[Proof of Theorem \ref{thm:main}]
	Since the estimator $\fNLest$ is an ordinary least squares estimator, its fluctuations are entirely determined by the stochastic component $E$, which is a vector of uncorrelated random variables with asymptotic variance one.
	By the fourth-moment bound of \cite{noupebook} (see also \cite{MaPeCUP}), the following bound holds
	\begin{equation*}
		\dW\left( \frac{\fNLest - \fNL}{\sqrt{\Var{\fNLest}}} , Z \right) \leq \frac{2\sqrt{2}}{3}
		\sqrt{\frac{\cumq{\fNLest} }{\left(\Var{\fNLest}\right)^2 }}.
	\end{equation*}
	Using Proposition \ref{lemmaest}, we have that 
	\begin{equation*}
		\begin{split}
			\frac{\cumq{\fNLest} }{\left(\Var{\fNLest}\right)^2} &= \frac{K_{\fNL}}{\sigma^4_{\fNL}} L^{-4}\\
			&= \frac{C_{\kappa}I_{\kappa,r}(\alpha)}{C_{\eta^2}^2I^{2}_{\eta^2,r}(\alpha)} L^{-4}.
		\end{split}
	\end{equation*}
Moreover,
	\begin{equation*}
		\begin{split}
			\frac{C_{\kappa}}{C_{\eta^2}^2} = \frac{3A^2}{8\pi^8}\frac{16\pi^8}{A^2}=6.
		\end{split}
	\end{equation*}
	Thus, 
	\begin{align*}
		\frac{2\sqrt{2}}{3}\sqrt{\frac{\cumq{\fNLest} }{\left(\Var{\fNLest}\right)^2 }} & =\frac{4}{\sqrt{3}}\frac{I^{\frac{1}{2}}_{\kappa;r}\left(\alpha\right)}{I_{\eta^2;r}\left(\alpha\right)} L^{-2}.
	\end{align*}
\end{proof}

\section*{Funding}
This work was partially supported by the PRIN 2022 project GRAFIA (Geometry of Random Fields and its Applications), funded by the Italian Ministry of University and Research (MUR).

\section*{Acknowledgments}
The author gratefully acknowledges D. Marinucci for his insightful ideas, valuable suggestions, and stimulating discussions, which were pivotal to the completion of this work.

\section*{Declaration of generative AI use}
The author used OpenAI's ChatGPT as a language-editing and writing assistance tool during the preparation of this manuscript. The tool was employed to improve the clarity, grammar, and organization of the text, and to assist with LaTeX formatting and presentation. All mathematical results, proofs, numerical methods, simulations, scientific interpretations, and conclusions were developed, verified, and approved by the author, who takes full responsibility for the content of the manuscript.

\medskip
\bibliographystyle{abbrv}
\bibliography{bibliografia}
\end{document}